\documentclass[11pt]{amsart}

\usepackage[margin=1in]{geometry}
\usepackage{amsmath,amssymb,amsthm,mathtools}
\usepackage{enumitem}
\usepackage{url}
\usepackage{hyperref}
\usepackage{array}

\numberwithin{equation}{section}

\newtheorem{theorem}{Theorem}[section]
\newtheorem{proposition}[theorem]{Proposition}
\newtheorem{lemma}[theorem]{Lemma}
\newtheorem{corollary}[theorem]{Corollary}

\theoremstyle{definition}

\newtheorem{remark}[theorem]{Remark}

\DeclareMathOperator{\res}{res}

\DeclareMathOperator{\cd}{cd}

\newcommand{\Fp}{\mathbb F_2}
\newcommand{\cts}{\mathrm{cts}}
\newcommand{\ab}{\mathrm{ab}}

\title[Degree-one cohomology of Metacyclic $2$-Groups]
{Degree-one cohomology of metacyclic $2$-groups: a low-bit dichotomy}

\author{Marina Palaisti}

\date{October 2026}

\subjclass[2020]{Primary: 20J06, Secondary: 16S37, 20D15}

\keywords{Metacyclic $2$-group, group cohomology, Bockstein, degree-one cohomology, universal Koszulity, strong Koszulity}

\begin{document}

	\begin{abstract}
		We study the subalgebra generated by degree-one mod-$2$ cohomology for metacyclic $2$-groups $G=\langle a,b\mid a^{2^\alpha}=1,\ b^{2^m}=a^c,\ b^{-1}ab=a^r\rangle$ with unique normal forms, where $\alpha,m\ge2$ and $r\equiv-1\pmod4$. Presentation compatibility forces $c=0$ or $2^{\alpha-1}$ and $G^{\mathrm{ab}}\cong C_2\times C_{2^m}$. If $c\equiv0\pmod4$, the degree-one-generated algebra is $\Fp[x,y]/(y^2,x^2+xy)$, a four-dimensional algebra shared by the four nonmodular types of Aaboun--Zekhnini. The unique exceptional case $\alpha=2$, $c=2$ is modular and has infinite-dimensional degree-one algebra $\Fp[x,y]/(y^2,x^2y)$. The regular algebra is PBW, Koszul, universally Koszul, and strongly Koszul, whereas the modular algebra is not Koszul. We realize the regular algebra by inflation from the common quotient $C_4\rtimes_{-1}C_4$ and identify its quadratic dual with the characteristic-two Jordan plane. 
		For the inversion tower $C_4\rtimes_{-1}C_{2^m}$, we give uniform presentations of the full mod-$2$ cohomology rings and determine the kernels and images of inflation in every degree. Their direct limit yields the continuous cohomology ring of $C_4\rtimes_{-1}\mathbb Z_2$. We also record an arithmetic consequence for Galois groups arising over cyclotomic $\mathbb Z_2$-extensions of real quadratic fields.
	\end{abstract}

	\maketitle
	
	\section{Introduction}\label{sec:introduction}
	The mod-$2$ cohomology of metacyclic $2$-groups has been studied at the level of the full cohomology ring. Diethelm treated nonabelian split metacyclic $p$-groups \cite{diethelm}, Rusin computed the mod-$2$ rings of metacyclic $2$-groups together with their Steenrod operations \cite{rusin}, and Huebschmann subsequently treated the mod-$p$ cohomology of general metacyclic groups \cite{huebschmann}. These calculations reveal substantial higher-degree structure even for groups with relatively simple presentations. For standard background on finite-group cohomology, see \cite{adem-milgram,benson,brown}. In particular, the theorem of Chebolu--Min\'a\v{c}--Okay--Schultz--Ure \cite[Theorem~1.1]{chebolu-minac-okay-schultz-ure-benson} implies that the full mod-$2$ cohomology ring of a nonabelian finite $2$-group is not generated by degree-one classes.
	For a graded $\Fp$-algebra $R$, write $\langle R_1\rangle$ for the subalgebra generated by its degree-one component, and set \[\operatorname{Dec}(H^*(G,\Fp)):=\langle H^1(G,\Fp)\rangle.\] This subalgebra records the cup products of degree-one characters and provides a natural object on which to study how the presentation of a metacyclic group controls its cohomology. The relation between degree-one cohomology, Koszul algebras, and quadratic duality has also been investigated in Galois cohomology and pro-$p$ group theory \cite{minac-pasini-quadrelli-tan,quadrelli-few-relations}.
	Aaboun and Zekhnini classify metacyclic-nonmodular $2$-groups with abelianization of type $(2,2^m)$ into four refined presentation types \cite[Proposition~2.4]{aaboun-zekhnini}. The types differ in their conjugation and power relations, yet their degree-one-generated cohomology algebras have the same form. We explain this uniformity directly from the lowest $2$-adic information in the presentations, before distinguishing the individual types. The relevance of the congruence modulo $4$ comes from the Bockstein itself. For a degree-one mod-$2$ character, the Bockstein can be computed using a set-theoretic lift to $\mathbb Z/4\mathbb Z$, so the resulting cocycle is determined by the corresponding low-bit multiplication law. This provides a direct route from the presentation to the degree-one cup products, without first computing the full cohomology ring.
	Consider a metacyclic group with unique normal forms \[G=\langle a,b\mid a^{2^\alpha}=1,\ b^{2^m}=a^c,\ b^{-1}ab=a^r\rangle,\qquad \alpha,m\ge2,\quad 0\le c<2^\alpha,\] and assume $r\equiv-1\pmod4$. The power relation records the carry that occurs when the exponent of $b$ passes through $2^m$. The compatibility conditions force \[c=0\quad\text{or}\quad c=2^{\alpha-1},\qquad G^{\mathrm{ab}}\cong C_2\times C_{2^m}.\] Thus the carry is divisible by $4$ except when $\alpha=2$ and $c=2$. This singles out the modular boundary directly from the presentation.
	Let $x,y\in H^1(G,\Fp)$ be the characters adapted to the generators $a,b$. In the regular branch, the normal-form multiplication modulo $4$ gives the Bockstein relations \[\beta(y)=0,\qquad \beta(x)=xy\ne0.\] As $\beta(z)=z^2$ for a degree-one mod-$2$ class, these identities determine the entire degree-one-generated algebra: \[\operatorname{Dec}(H^*(G,\Fp))\cong\Fp[x,y]/(y^2,x^2+xy).\] The algebra is four-dimensional, with basis $1,x,y,xy$. At the unique modular boundary $M_m$, the degree-one algebra instead has the presentation \[\operatorname{Dec}(H^*(M_m,\Fp))\cong\Fp[x,y]/(y^2,x^2y).\] In this case the powers of $x$ remain nonzero in every degree. Thus the divisibility of the carry by $4$ distinguishes two different algebraic behaviors.
	The uniformity in the regular branch also has a group-theoretic explanation. Reducing the two generators modulo their fourth powers gives a surjection onto $C_4\rtimes_{-1}C_4$. Inflation along this quotient induces an isomorphism of degree-one-generated cohomology algebras. Hence the common algebra of the four nonmodular types comes from a single group of order $16$ through an explicit homomorphism.
	We next examine the homological properties of these two algebras. The regular algebra is PBW, Koszul, universally Koszul, and strongly Koszul; its quadratic dual is the characteristic-two Jordan plane. The modular algebra has a minimal cubic relation and is not Koszul. Consequently, the same low-bit condition that determines the degree-one products also determines Koszulity.
	The common quotient $C_4\rtimes_{-1}C_4$ is the first level of the inversion tower \[G_m=C_4\rtimes_{-1}C_{2^m},\qquad m\ge2.\] This family permits a comparison between the stable degree-one core and the higher-degree classes of the full cohomology ring. We determine $H^*(G_m,\Fp)$ at every finite level and show that inflation kills precisely the ideal generated by the periodicity class coming from the cyclic quotient, while preserving the class restricting nontrivially to $C_4$. Passing to the direct limit gives \[H^*_{\mathrm{cts}}(C_4\rtimes_{-1}\mathbb Z_2,\Fp)\cong\Fp[x,y,\omega]/(y^2,x^2+xy),\qquad |\omega|=2.\]
	Finally, an arithmetic application shows how the group-theoretic classification determines the degree-one algebra of certain Galois groups over cyclotomic $\mathbb Z_2$-extensions of real quadratic fields. Once the arithmetic hypotheses select a metacyclic type, the cohomological and Koszul conclusions follow from the presentation.

	\section{Metacyclic presentations and low-bit compatibility}\label{sec:four-types}
	
	\subsection{Compatibility and abelianization}
	
	A metacyclic group is built from two cyclic pieces: a cyclic normal subgroup and a cyclic quotient. We consider finite metacyclic $2$-groups with a cyclic normal subgroup
	\[A=\langle a\rangle\cong C_{2^\alpha}\]
	and cyclic quotient
	\[G/A\cong C_{2^m},\]
	where $\alpha,m\ge2$. Choose an element $b\in G$ whose image generates $G/A$. Since $A$ is normal, conjugation by $b$ induces an automorphism of $A$. As $A$ is cyclic, there is an odd integer $r$ such that $b^{-1}ab=a^r$. The parameter $r$ therefore describes how the quotient generator acts on the normal cyclic subgroup.
	
	Although the image of $b$ has order $2^m$ in $G/A$, the element $b$ itself need not have order $2^m$ in $G$. What is required is that $b^{2^m}$ belongs to $A$, so there is a uniquely determined integer $c$, with $0\le c<2^\alpha$, such that
	\[b^{2^m}=a^c.\]
	Thus $c$ records the possible discrepancy between the order of $b$ and the order of its image in the quotient. More concretely, when products are expressed in terms of powers of $a$ and $b$, the relation $b^{2^m}=a^c$ determines the additional factor that appears whenever the exponent of $b$ passes through $2^m$. It is therefore convenient to regard $a^c$ as the carry in the power relation.
	
	Every element of $G$ can be expressed in the form $a^ib^j$. We assume throughout that these expressions are unique for
	\[0\le i<2^\alpha,\qquad 0\le j<2^m.\]
	In particular, $|G|=2^{\alpha+m}$, and we work with the unified presentation
	\begin{equation}\label{eq:unified-presentation}G=\langle a,b\mid a^{2^\alpha}=1,\ b^{2^m}=a^c,\ b^{-1}ab=a^r\rangle.\end{equation}
	The parameters $r$ and $c$ describe different aspects of the extension: the first determines the conjugation action, while the second determines the power relation. These parameters are nevertheless constrained by the requirement that both relations hold simultaneously.
	
	Indeed, since $b^{2^m}=a^c$ lies in the abelian subgroup $A$, conjugation by $b^{2^m}$ must act trivially on $A$. At the same time, the element $a^c=b^{2^m}$ must be fixed by conjugation by $b$. These two compatibility conditions impose congruences on $r$ and $c$, which we establish before considering the refined classification of metacyclic $2$-groups.

	\begin{proposition}\label{prop:compatibility}
		For a presentation \eqref{eq:unified-presentation} with the stated normal forms, one has
		\[r^{2^m}\equiv1\pmod{2^\alpha},\qquad c(r-1)\equiv0\pmod{2^\alpha}.\]
		If, in addition, $r\equiv-1\pmod4$, then necessarily $c=0$ or $c=2^{\alpha-1}$. Under this additional hypothesis, moreover,
		\[G'=\langle a^2\rangle,\qquad G^{\mathrm{ab}}\cong C_2\times C_{2^m}.\]
	\end{proposition}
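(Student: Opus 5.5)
The two congruences come from evaluating conjugation by powers of $b$ in two ways. Iterating $b^{-1}ab=a^r$ gives $b^{-k}ab^k=a^{r^k}$ for all $k\ge0$. Taking $k=2^m$ and using $b^{2^m}=a^c\in A$, which commutes with $a$, we get $a=a^{r^{2^m}}$. Since $a$ has order exactly $2^\alpha$ (this is where uniqueness of normal forms enters), it follows that $r^{2^m}\equiv1\pmod{2^\alpha}$. Similarly, $b$ commutes with $b^{2^m}=a^c$, so
\[a^c=b^{-1}a^cb=a^{cr},\]
and hence $c(r-1)\equiv0\pmod{2^\alpha}$.

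Now assume $r\equiv-1\pmod4$. Then $r-1\equiv2\pmod4$, so $r-1=2u$ with $u$ odd. The second congruence becomes $2cu\equiv0\pmod{2^\alpha}$, and since $u$ is a unit this gives $2c\equiv0\pmod{2^\alpha}$, that is, $2^{\alpha-1}\mid c$. With $0\le c<2^\alpha$ this leaves only $c=0$ or $c=2^{\alpha-1}$.

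For the commutator subgroup, note that $[a,b]=a^{-1}b^{-1}ab=a^{r-1}$. Since $r-1=2u$ with $u$ odd, the element $a^{r-1}$ generates $\langle a^2\rangle$, so $\langle a^2\rangle\subseteq G'$. Conversely, $\langle a^2\rangle$ is characteristic in the normal cyclic subgroup $A$, hence normal in $G$. In $G/\langle a^2\rangle$ the image $\bar a$ satisfies $\bar b^{-1}\bar a\bar b=\bar a^{r}=\bar a$ because $r$ is odd and $\bar a^2=1$. Thus the generators $\bar a,\bar b$ commute, the quotient is abelian, and $G'=\langle a^2\rangle$.

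It remains to identify $G^{\mathrm{ab}}=G/\langle a^2\rangle$. It has order
\[\frac{|G|}{|\langle a^2\rangle|}=\frac{2^{\alpha+m}}{2^{\alpha-1}}=2^{m+1},\]
using $|G|=2^{\alpha+m}$ from the normal forms. It is generated by commuting elements $\bar a,\bar b$ with $\bar a^2=1$. Also $\bar b^{2^m}=\bar a^c=1$, because $c$ is even: for $c=2^{\alpha-1}$ this uses $\alpha\ge2$. So $G^{\mathrm{ab}}$ is a quotient of $C_2\times C_{2^m}$, and comparing orders gives $G^{\mathrm{ab}}\cong C_2\times C_{2^m}$.

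The only step needing care is the use of normal forms. It guarantees that $a$ has order exactly $2^\alpha$, so the exponent congruences hold modulo $2^\alpha$ and not modulo a smaller power. It also guarantees $|G|=2^{\alpha+m}$, which underlies the order count for the abelianization. Everything else is direct computation.
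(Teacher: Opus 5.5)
Your proof is correct and follows the paper's route for the two congruences and for forcing $c\in\{0,2^{\alpha-1}\}$. For $G'$ and $G^{\mathrm{ab}}$, you show that $\langle a^2\rangle$ is normal with abelian quotient and then identify $G^{\mathrm{ab}}$ by counting orders with $|G|=2^{\alpha+m}$, whereas the paper projects to $G/A$ to get the exact order of $\bar b$ and the independence of $\bar a,\bar b$. This is a minor variant, and your proof of the inclusion $G'\subseteq\langle a^2\rangle$ is more explicit than the paper's.
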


	\begin{proof}
		For the first congruence, we compare two ways in which $b^{2^m}$ acts on the normal subgroup $A$. Since $b^{2^m}=a^c\in A$ and $A$ is abelian, conjugation by $b^{2^m}$ acts trivially on $A$. In particular,
		\[(b^{2^m})^{-1}a\,b^{2^m}=a.\]
		On the other hand, after $2^m$ conjugations by $b$ and using $b^{-1}ab=a^r$, we obtain
		\[(b^{2^m})^{-1}a\,b^{2^m}=a^{r^{2^m}}.\]
		Thus $a^{r^{2^m}}=a$, and since $a$ has order $2^\alpha$, this gives the first congruence. 
		
		For the second congruence, note that
		\[a^c=b^{2^m}=b^{-1}b^{2^m}b=b^{-1}a^cb=(b^{-1}ab)^c=a^{cr},\]
		and we hence \[c(r-1)\equiv0\pmod{2^\alpha}.\]
		
		Now if $r\equiv-1\pmod4$, then $r-1 \equiv 2 \pmod 4$, so $v_2(r-1)=1$. 
		Thus the remaining factor $2^{\alpha-1}$ must divide $c$, implying $	2^{\alpha-1}\mid c$. Because $c$ has been chosen in the range $0\le c<2^\alpha$, there are only two possibilities: $c=0\;\; \text{ or }\;\; c=2^{\alpha-1}$.
		
		We next determine the commutator subgroup. Since $G/A$ is cyclic hence abelian, it is abelian, $G'\subseteq A$. Because $A$ itself is cyclic and therefore abelian, the only nontrivial commutation is $[a,b]=a^{-1}b^{-1}ab=a^{r-1}$, and so $G'=\langle a^{r-1}\rangle$. But $v_2(r-1)=1$, so $r-1=2u$ for some odd integer $u$, which then gives
		\[\langle a^{r-1}\rangle=\langle a^{2u}\rangle=\langle a^2\rangle.\]
		Therefore $G'=\langle a^2\rangle$. It remains to identify the abelianization. Since $a^2\in G'$, the image of $a$ in $G^{\ab}=G/G'$ has order at most $2$. It is not trivial, because $a\notin\langle a^2\rangle$. Thus the image of $a$ has order exactly $2$.
		
		The defining relation $b^{2^m}=a^c$ becomes $\overline b^{\,2^m}=\overline a^{\,c}$ in the abelianization. We have already shown that $c$ is either $0$ or $2^{\alpha-1}$, and in either case $c$ is even. Since $\overline a$ has order $2$, this gives $\overline a^{\,c}=1$. Hence $\overline b^{\,2^m}=1$. Thus the order of $\overline b$ divides $2^m$. In fact its order is exactly $2^m$. If $\overline b^{\,k}=1$, then $b^k\in G'=\langle a^2\rangle\subseteq A$. Projecting to $G/A\cong C_{2^m}$ shows that the image of $b^k$ is trivial, and since the image of $b$ generates $G/A$, this forces $2^m\mid k$. Therefore $\overline b$ has order $2^m$.
		
		Finally, the two cyclic factors are independent. Suppose $\overline a^{\,\varepsilon}\overline b^{\,j}=1$, $\varepsilon\in\{0,1\}$. Then $a^\varepsilon b^j\in G'\subseteq A$. Projecting to $G/A$ gives $j\equiv0\pmod{2^m}$. The relation therefore reduces to $a^\varepsilon\in G'=\langle a^2\rangle$, which forces $\varepsilon=0$. Consequently
		\[G^{\ab}\cong C_2\times C_{2^m}.\]
	\end{proof}

	The preceding proposition explains why the carry condition that will be used later is almost automatic. Under the hypothesis $r\equiv-1\pmod4$, compatibility leaves only two possible values of the carry: $c=0$ or $c=2^{\alpha-1}$. The first gives $b^{2^m}=1$, while the second gives $b^{2^m}=a^{2^{\alpha-1}}$, the unique element of order $2$ in $A$. If $\alpha \ge 3$, then $2^{\alpha-1}\equiv0\pmod4$, so both compatible values of $c$ are divisible by $4$. Thus the condition $c \equiv 0 \pmod 4$ does not need to be imposed separately once $\alpha\ge3$.
	
	There is only one exception: When $\alpha =2$, the nonzero compatible carry is $c=2$. This is the unique compatible case in which the carry is not divisible by $4$. This exceptional situation is the modular boundary treated separately below.

	\subsection{Product convention and low-bit multiplication}
	
	\begin{proposition}\label{prop:normal-form-product}
		For $0\le  j,v<2^m$,
		\[(a^ib^j)(a^ub^v)= a^{i+r^{-j}u+\delta c}b^{j+v-\delta2^m},\]
		where $\delta=0$ if $j+v<2^m$ and $\delta=1$ if $j+v\ge 2^m$.
	\end{proposition}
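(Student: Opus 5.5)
The plan is to move every power of $a$ to the left of every power of $b$ by a commutation rule derived from the conjugation relation, and then reduce the resulting exponent of $b$ with the power relation $b^{2^m}=a^c$. Throughout, exponents of $a$ are read modulo $2^\alpha$. Since $r$ is odd, it is a unit modulo $2^\alpha$, so $r^{-1}$ (and hence $r^{-j}$) makes sense as a residue modulo $2^\alpha$.

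The first step is the basic commutation rule. The relation $b^{-1}ab=a^r$ says that the automorphism $\phi\colon g\mapsto b^{-1}gb$ of the normal subgroup $A$ is $a\mapsto a^r$, and hence $a^u\mapsto a^{ru}$. Its inverse $g\mapsto bgb^{-1}$ is therefore the automorphism of $A$ given by $a^u\mapsto a^{r^{-1}u}$. To check this, let $s\equiv r^{-1}\pmod{2^\alpha}$; then $\phi(a^{s})=a^{rs}=a$, so $ba b^{-1}=a^{s}$. Thus
\[ba^u=a^{r^{-1}u}b.\]
Induction on $j\ge0$ then gives
\[b^ja^u=a^{r^{-j}u}b^j.\]
The inductive step is
\[b^{j+1}a^u=b\,a^{r^{-j}u}b^j=a^{r^{-j-1}u}b^{j+1}.\]

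The second step is the substitution. Using the rule from the first step,
\[(a^ib^j)(a^ub^v)=a^i\,(b^ja^u)\,b^v=a^{i+r^{-j}u}b^{j+v}.\]
Because $0\le j,v<2^m$, we have $0\le j+v<2^{m+1}$, which leaves two cases.

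If $j+v<2^m$, the word $a^{i+r^{-j}u}b^{j+v}$ is already in normal form, and this is the case $\delta=0$.

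If $j+v\ge 2^m$, write
\[b^{j+v}=b^{2^m}b^{j+v-2^m}=a^c\,b^{j+v-2^m},\]
where now $0\le j+v-2^m<2^m$. Since $A$ is abelian, $a^{i+r^{-j}u}a^c=a^{i+r^{-j}u+c}$, which gives the formula with $\delta=1$. By the uniqueness of normal forms, this is the normal-form expression of the product.

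The only point requiring care is the direction of the conjugation convention. It must be $b^ja^u=a^{r^{-j}u}b^j$ rather than $a^{r^ju}b^j$, because $b^{-1}ab=a^r$ moves $a$ past $b$ only when $b$ sits on the right. This is why $r^{-j}$ appears in the statement. Everything else is a routine manipulation in $A$.
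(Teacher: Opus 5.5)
Your proposal is correct and takes the same route as the paper. You derive $b^ja^u=a^{r^{-j}u}b^j$ from $b^{-1}ab=a^r$, substitute it into the product, and then apply $b^{2^m}=a^c$ at most once, which suffices because $0\le j+v<2^{m+1}$. The only difference is that you spell out the inverse-automorphism check and the induction on $j$, which the paper leaves implicit.
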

	
	\begin{proof}
		The relation $b^{-1}ab=a^r$ implies $b^ja^ub^{-j}=a^{r^{-j}u}$, and hence $(a^ib^j)(a^ub^v)=a^{i+r^{-j}u}b^{j+v}$. Because $0\le  j,v<2^m$, at most one use of $b^{2^m}=a^c$ is required to return to normal form, giving the stated formula. In particular, with the convention $b^{-1}ab=a^r$, the exponent is $r^{-j}$.
	\end{proof}
	
	\begin{lemma}\label{lem:low-bit-multiplication}
		Assume $r\equiv -1\pmod 4$ and either $c=0$ or $\alpha\ge 3$. If $(a^ib^j)(a^ub^v)=a^wb^t$ is in normal form, then
		\[w\equiv i+(-1)^ju\pmod 4\]
		and
		\[t\equiv j+v\pmod2.\]
	\end{lemma}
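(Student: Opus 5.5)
The plan is to read both congruences directly off Proposition~\ref{prop:normal-form-product}. That result gives the product in normal form as
\[w\equiv i+r^{-j}u+\delta c\pmod{2^\alpha},\qquad t=j+v-\delta 2^m.\]
Since $\alpha\ge2$, the modulus $2^\alpha$ is divisible by $4$. Hence reducing the first identity modulo $4$ is legitimate, even though $w$ is only defined as a residue in $[0,2^\alpha)$. Likewise $m\ge2$ makes $\delta2^m$ even, which gives $t\equiv j+v\pmod 2$ immediately. So the second congruence needs nothing further.

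For the first congruence, I would handle the two terms $r^{-j}u$ and $\delta c$ separately.

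\emph{The conjugation term.} Here $r$ is odd, so it is a unit modulo $2^\alpha$, and $r^{-1}$ denotes its inverse there. From $r\equiv-1\pmod4$ we get $r^{-1}\equiv-1\pmod 4$, because $(-1)(-1)=1$ and inverses modulo $2^\alpha$ reduce to inverses modulo $4$. Therefore $r^{-j}\equiv(-1)^j\pmod 4$, and so $r^{-j}u\equiv(-1)^ju\pmod4$.

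\emph{The carry term.} The hypothesis ``$c=0$ or $\alpha\ge3$'' is exactly what kills $\delta c$ modulo $4$. If $c=0$ this is trivial. If $\alpha\ge3$, Proposition~\ref{prop:compatibility} forces $c\in\{0,2^{\alpha-1}\}$, and in either case $4\mid c$ because $\alpha-1\ge2$.

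Combining the two terms gives $w\equiv i+(-1)^ju\pmod 4$.

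The only point needing care is the meaning of $r^{-j}$ as an exponent. It is to be read as an element of $(\mathbb Z/2^\alpha)^\times$ acting on exponents of $a$, and that reduction modulo $4$ is well defined because $4\mid2^\alpha$. Everything else is a routine reduction of Proposition~\ref{prop:normal-form-product}.
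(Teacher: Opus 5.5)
Your proposal is correct and takes essentially the paper's route. Like the paper, you reduce the normal-form product of Proposition~\ref{prop:normal-form-product} modulo $4$, use $r^{-j}\equiv(-1)^j\pmod 4$ for the conjugation term, invoke Proposition~\ref{prop:compatibility} to make the carry vanish modulo $4$, and observe that $\delta 2^m$ is even. Your handling of the carry is in fact more careful than the paper's: you explicitly use the hypothesis ``$c=0$ or $\alpha\ge3$'' to rule out $c=2$, which the paper's phrase ``in either case'' leaves implicit.
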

	
	\begin{proof}
		Since $r^{-1} \equiv -1 \pmod 4$, we have that $r^{-j} \equiv (-1)^j \pmod4$. By Proposition~\ref{prop:compatibility}, the carry is either $0$ or $2^{\alpha-1}$. In either case, the carry vanishes modulo $4$, yielding the first congruence. The second follows since reduction to normal form changes the exponent of $b$ only by multiples of $2^m$, which is even.
	\end{proof}
	
	\subsection{The four refined nonmodular types}

	We recall the classification of metacyclic-nonmodular $2$-groups with abelianization of type $(2,2^m)$ given by Aaboun--Zekhnini \cite[Proposition~2.4]{aaboun-zekhnini}. Their four types are distinguished by the conjugation action and the power relation. In Types~1 and~2, the action is inversion, so $r=-1$, whereas Types~3 and~4 have $r=-1+2^s$ for an integer $s$ subject to the restrictions below. The power relation is split in Types~1 and~3, where $c=0$, and nonsplit in Types~2 and~4, where $c=2^{\alpha-1}$.
	
	The four types, together with their parameter restrictions, are summarized in the following table.

	\begin{center}
		\small \renewcommand{\arraystretch}{1.35}
		\begin{tabular}{c|c|c|p{6.0cm}}
			Type & $r$ & $c$ & Parameter restrictions\\
			\hline
			1 & $-1$ & $0$ & $\alpha\ge 2$\\
			2 & $-1$ & $2^{\alpha-1}$ & $\alpha\ge 3$\\
			3 & $-1+2^s$ & $0$& Either $s=\alpha-1$ with $\alpha\ge 3$, or $2\le s\le\alpha-2$ with $\alpha\ge 4$\\
			4 & $-1+2^s$ & $2^{\alpha-1}$
			& $2\le s\le\alpha-2$, $\alpha\ge 4$
		\end{tabular}
	\end{center}
	
	Thus, the classification separates the groups according to the conjugation action and the possible values of the power parameter $c$. Notably, the case $s=\alpha-1$ occurs only in Type~3, while Types~3 and~4 both admit $2\le s\le\alpha-2$.
	
	In every row, $r \equiv- 1 \pmod 4$. Types~1 and~3 are split. In Types~2 and~4 the classification imposes $\alpha\ge 3$, so the nonsplit carry is divisible by $4$. Hence all four types lie in the regular branch of the low-bit dichotomy without any additional cohomological hypothesis.

	\subsection{The unique low-bit boundary}
	
	The only compatible nonsplit presentation with $r\equiv-1\pmod4$ and carry not divisible by $4$ is
	\[M_m=\langle a,b\mid a^4=1,\ b^{2^m}=a^2,\ b^{-1}ab=a^{-1}\rangle, \;\; m\ge 2. \]
	
	\begin{proposition}\label{prop:modular-boundary}
		The group $M_m$ is modular. More precisely, $b$ has order $2^{m+1}$ and, with $t=ab^{2^{m-1}}$, one has
		\[t^2=1, \;\; tbt^{-1}=b^{1+2^m}.\]
		Thus
		\[ M_m\cong \langle b,t\mid b^{2^{m+1}}=1,\ t^2=1,\ tbt^{-1}=b^{1+2^m}\rangle.\]\end{proposition}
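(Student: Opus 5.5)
The plan is to compute directly in $M_m$ with the relations $a^4=1$, $b^{2^m}=a^2$, $b^{-1}ab=a^{-1}$, and then compare orders to get the isomorphism.

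\emph{Order of $b$ and structure of $\langle b\rangle$.} First, $b^{2^{m+1}}=(a^2)^2=a^4=1$. Next, $b^{2^m}=a^2\neq1$, since $a$ has order $4$ by the normal forms of \eqref{eq:unified-presentation} with $\alpha=2$. Hence $b$ has order exactly $2^{m+1}$, and $\langle b\rangle$ has index $2$ in $M_m$, which has order $2^{m+2}$.

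\emph{The element $t$.} Since $m\ge2$, the exponent $2^{m-1}$ is even. As $b^{-1}ab=a^{-1}$, conjugation by $b^2$ fixes $a$, so $b^{2^{m-1}}$ commutes with $a$. Therefore
\[t^2=a^2b^{2^m}=a^2\cdot a^2=1.\]
For the conjugation formula, use that $b^{2^{m-1}}$ commutes with $b$ and that $aba^{-1}=a\,(ba^{-1}b^{-1})\,b=a\cdot a\cdot b=a^2b$. Here $ba^{-1}b^{-1}=a$ follows from $bab^{-1}=a^{-1}$, which is equivalent to the given relation. This gives
\[tbt^{-1}=aba^{-1}=a^2b=b^{2^m}b=b^{1+2^m}.\]

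\emph{The isomorphism.} $t\notin\langle b\rangle$, because in the normal form $a^1b^{2^{m-1}}$ the $a$-exponent is not even. Note that $\langle b\rangle\cap A=\{1,a^2\}$. Hence $\langle b,t\rangle=M_m$ by order count, using $a=tb^{-2^{m-1}}$. The relations above give a surjection from the modular group
\[\langle b,t\mid b^{2^{m+1}}=1,\ t^2=1,\ tbt^{-1}=b^{1+2^m}\rangle\]
onto $M_m$. That group has order at most $2^{m+2}$, since every element is $b^it^\epsilon$. Both groups have order $2^{m+2}$, so the surjection is an isomorphism. The step needing the most care is the sign bookkeeping in the conjugation of $b$ by $a$, which is the only place where the convention $b^{-1}ab=a^r$ matters.
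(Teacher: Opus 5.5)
Your proposal is correct and follows the paper's proof: you make the same direct computations of the order of $b$, of $t^2=a^2b^{2^m}=1$ via $b^{2^{m-1}}$ centralizing $a$, and of $tbt^{-1}=aba^{-1}$. You write the last as $a^2b$ and the paper writes $ba^{-2}$, which agree since $a^2=b^{2^m}$. Your closing argument, a von Dyck surjection combined with the bound $|\langle b,t\mid\cdots\rangle|\le 2^{m+2}$, usefully spells out the isomorphism step that the paper leaves at ``Thus''.
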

	
	\begin{proof}
		The relation $b^{2^m}=a^2\ne 1$ shows that $b$ has order $2^{m+1}$. Since $2^{m-1}$ is even, $b^{2^{m-1}}$ centralizes $a$, and therefore $t^2=a^2b^{2^m}=a^2a^2=1$. 
		
		From $b^{-1}ab=a^{-1}$ we obtain $aba^{-1}=ba^{-2}=ba^2=b^{1+2^m}$. The factor $b^{2^{m-1}}$ commutes with $b$, so the same conjugation formula holds with $a$ replaced by $t$.
	\end{proof}
	
	\begin{theorem}\label{thm:modular-boundary-core}
		Let $m\ge 2$ and let
		\[M_m=\langle a,b\mid a^4=1,\ b^{2^m}=a^2,\ b^{-1}ab=a^{-1}\rangle.\]
		Define $x,y\in H^1(M_m,\mathbb F_2)$ by
		\[x(a^i b^j)=i\pmod 2,\;\; y(a^i b^j)=j\pmod 2.\]
		Then\[ \operatorname{Dec}(H^*(M_m,\mathbb F_2)) \cong \frac{\mathbb F_2[x,y]}{(y^2,x^2y)}, \;\;| x| = | y| = 1.\]
		In particular, $xy\ne0$, $x^n\ne0$ for every $n\ge1$, and $y^2=x^2y=0$. The algebra has $\mathbb F_2$-basis $\{x^n:n\ge0\}\ \cup\ \{y,xy\}$ and Hilbert series $\displaystyle \frac{1}{1-t}+t+t^2$.
	\end{theorem}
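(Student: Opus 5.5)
Via Proposition~\ref{prop:modular-boundary}, $M_m$ is the modular group $M_{2^{m+2}}=\langle b,t\mid b^{2^{m+1}}=1,\ t^2=1,\ tbt^{-1}=b^{1+2^m}\rangle$ of order $2^{m+2}$, and $m\ge2$ gives order at least $16$. So the plan is to combine the known mod-$2$ cohomology of the modular group with a direct Bockstein computation.

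\textbf{Squares of $x$ and $y$.} Recall $z^2=\beta(z)$ for $z\in H^1$. The class $y$ factors through the quotient $M_m\to C_{2^{m+1}}$ killing $t$ (note $y(t)=y(ab^{2^{m-1}})=0$ since $2^{m-1}$ is even, and $y(b)=1$). It also factors through $M_m/\langle a\rangle\cong C_{2^m}$, and $y$ lifts to the integral character $a^ib^j\mapsto j\bmod 4$, which is a homomorphism because $m\ge2$. Hence $\beta(y)=0$, so $y^2=0$.

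For $x$, I will compute the Bockstein with the lift $\tilde x(a^ib^j)=i\bmod 4$ and the normal-form product of Proposition~\ref{prop:normal-form-product} with $r=-1$, $c=2$, $\alpha=2$. This gives
\[\delta\tilde x(g,h)=\bigl((-1)^j-1\bigr)u+2\delta_{j+v\ge 2^m}\pmod 4,\]
so $\beta(x)(g,h)=j u+\lfloor (j+v)/2^m\rfloor\pmod 2$. The first term is the cup product $y\cup x$. The second is the inflation of the Bockstein of the generator of $H^1(C_{2^m})$ along $M_m\to M_m/\langle a\rangle$, namely the degree-two periodicity class $e$. Thus $x^2=xy+e$.

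\textbf{Nonvanishing of $x^n$.} Restrict to the subgroup $\langle t\rangle\cong C_2$. Since $x(t)=1$, the restriction of $x$ is the generator $u$ of $H^*(C_2)=\Fp[u]$. Hence $\res(x^n)=u^n\ne0$ for all $n$.

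\textbf{The remaining relations and the basis.} Next I need $xy\ne0$ and $x^2y=0$. For $x^2y$, write $x^2y=xy^2+ey=ey$, and I claim $ey=0$. Both $e$ and $y$ are inflated from $C_{2^m}=M_m/\langle a\rangle$, where the generator $y$ has $y^2=0$. Equivalently $ey=y\cdot\beta_{m}(y)$ for the higher Bockstein, and the product $y\,e$ in $H^3(C_{2^m})$ is nonzero there. So $ey=0$ is not a formal consequence and must come from a genuine relation in $M_m$. This is the \textbf{main obstacle}. I would handle it by invoking the known presentation of $H^*(M_{2^n},\Fp)$ (Rusin \cite{rusin}, or the Lyndon--Hochschild--Serre spectral sequence for the central extension by $\langle b^{2^m}\rangle$). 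In that ring, degree-one classes $x,y$ satisfy $y^2=0$ and $x^2y=0$, and $H^2$ has basis $x^2,xy$ with $e$ equal to $x^2+xy$. Alternatively, one can check $ey=0$ directly as follows: $e=x^2+xy$, so $ey=x^2y$. Then verify that $x^2y$ restricts to zero on a detecting family of subgroups, namely the abelian maximal subgroups $\langle b\rangle$ and $\langle b^2,t\rangle\cong C_{2^m}\times C_2$. I would need to confirm that these detect $H^3$, which I expect from the known ring. For $xy\ne0$, I would check $H^2$: since $\dim H^1=2$ and $H^2$ has dimension $2$, a dependence $xy\in\{0,x^2\}$ is ruled out. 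Restriction to $\langle b^2,t\rangle$ sends $x\mapsto u$ and $y\mapsto0$, which rules out $xy=x^2$. Restriction to $C_{2^m}\times C_2$ or to the Klein-type subgroup rules out $xy=0$, or this follows from the LHS $E_2$ page.

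\textbf{Conclusion.} The surjection $\Fp[x,y]/(y^2,x^2y)\to\operatorname{Dec}$ sends the spanning set $\{x^n\}\cup\{y,xy\}$ to linearly independent elements: distinct degrees, the restrictions above, and $x^2\ne xy$. So it is an isomorphism, and the Hilbert series follows.
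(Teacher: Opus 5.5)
Your proofs of $y^2=0$ (the lift to $\mathbb Z/4$) and of $x^n\neq 0$ (restriction to $\langle t\rangle$) agree with the paper. Your cochain computation $\beta(x)=xy+e$ is also correct, where $e$ is the inflated periodicity class of $M_m/\langle a\rangle\cong C_{2^m}$. It gives $x^2y=ey$, and you are right that this vanishing is not formal. However, the two steps that carry the content of the theorem are not established.

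\textbf{The step $xy\neq 0$ fails as proposed.} The product $xy$ restricts to zero on \emph{every} proper subgroup, so no restriction test can detect it.
On $\ker x=\langle b\rangle$ and on $\ker y=\langle b^2,t\rangle$ one factor already vanishes; the latter contains your Klein four-group $\langle b^{2^m},t\rangle$. On the third maximal subgroup $\ker(x+y)=\langle bt\rangle\cong C_{2^{m+1}}$, both $x$ and $y$ restrict to the generator $z$, and $z^2=0$ there.
Your appeal to $\dim H^2(M_m,\Fp)=2$ is unproved, and even granted it does not exclude $xy=0$. A global argument is needed. In the paper it is the LHS spectral sequence of $\langle b\rangle\rtimes\langle t\rangle$: $X\bar y\in E_2^{1,1}$ is a product of permanent cycles and cannot be the target of any differential.

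\textbf{The relation $x^2y=0$ is deferred, not proved.} You leave it either to a citation of Rusin's ring or to the expectation that $\langle b\rangle$ and $\langle b^2,t\rangle$ detect $H^3$. The citation amounts to assuming the result, and you do not match his generators to your $x,y,e$. The detection claim cannot be taken on faith: the previous paragraph shows this family fails to detect $H^2$. It does happen to detect $H^3$, but checking that requires knowing $H^3(M_m,\Fp)$, which is the computation you are trying to avoid.

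\textbf{The missing ingredient} is the one the paper supplies. Set $K=\langle b\rangle$ and $\sigma=1+2^m$.
\begin{enumerate}
\item One has $H_1(T,K)=\ker(\sigma-1)/(\sigma+1)K=2K/2K=0$, so $H_2(M_m,\mathbb Z)=0$ and $\dim H^2=2$.
\item Since $X^2$ and $X\bar y$ survive to $E_\infty$, the class $v\in E_2^{0,2}$ must support a differential.
\item The extension splits, so $d_3(v)=X^3$ is impossible. Hence $d_2(v)=X^2\bar y$ and $E_\infty^{2,1}=0$.
\item Therefore $x^2y\in F^3H^3=\langle x^3\rangle$, and restriction to $\langle t\rangle$ forces $x^2y=0$.
\end{enumerate}
In your formulation, this is the statement that inflation from $C_{2^m}$ kills $ye$, which is exactly the nonformal relation you identified but did not prove.
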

	
	\begin{proof}
		By Proposition~2.4, with $t=ab^{2^{m-1}}$ we have
		\[M_m=K\rtimes T,\;\; K=\langle b\rangle\cong C_{2^{m+1}},\;\;T= \langle t\rangle\cong C_2,\]where conjugation by $t$ acts on $K$ as $\sigma(b)=b^{1+2^m}$. Since $m\ge2$,
		\[x(t) =1,\;\; y(t)=0,\;\; x(b)=0,\;\; y(b)= 1.\]
		Thus $x$ restricts to the generator of $H^1(T,\mathbb F_2)$, while $y$ restricts to the generator of $H^1(K,\mathbb F_2)$. Consequently, $x^n\ne0$ for every $n\ge1$, by restriction to $T$.
		
		Moreover, $y$ lifts to a homomorphism $M_m\to\mathbb Z/4\mathbb Z$ sending $a$ to $0$ and $b$ to $1$: the defining relations are preserved because $m\ge2$. Therefore the mod-$2$ Bockstein of $y$ vanishes, and $y^2=0$.

		The restriction to $T$ shows that the powers of $x$ persist, while the lift of $y$ gives $y^2=0$. To determine the remaining products, we first compute the dimension of $H^2(M_m,\Fp)$. This will constrain the differentials in the mod-$2$ Lyndon--Hochschild--Serre spectral sequence and allow us to identify the first relation involving both generators.
		Consider the integral homology Lyndon--Hochschild--Serre spectral sequence associated with $1\to K\to M_m\to T\to1$ (see \cite{lyndon,hochschild-serre}): \[E^2_{p,q}=H_p(T,H_q(K,\mathbb Z))\Longrightarrow H_{p+q}(M_m,\mathbb Z).\]
		Since $H_2(K,\mathbb Z)=H_2(T,\mathbb Z)=0$, the only potentially nonzero term in total degree two is $E^2_{1,1}=H_1(T,K)$. Under the identification $K=\mathbb Z/2^{m+1}\mathbb Z$, the action of the generator of $T$ is multiplication by $\sigma=1+2^m$. The standard cyclic homology formula \cite{brown} gives
		\[H_1(T,K)=\frac{\ker(\sigma-1)}{(\sigma+1)K}.\]
		
		Since $\sigma-1=2^m$, multiplication by $\sigma-1$ on $K=\mathbb Z/2^{m+1}\mathbb Z$ has kernel $2K$. On the other hand, $1+\sigma=2(1+2^{m-1})$, and $1+2^{m-1}$ is odd because $m\ge2$. Hence
		\[(1+\sigma)K=2K,\;\; H_1(T,K)=\frac{2K}{2K}=0.\]
		As $H_2(K,\mathbb Z)=H_2(T,\mathbb Z)=0$, all terms of total degree two on the $E_2$-page vanish. Thus $H_2(M_m,\mathbb Z)=0$. Since $M_m^{\mathrm{ab}}\cong C_2\times C_{2^m}$, the universal coefficient theorem yields
		\[\dim_{\Fp}H^2(M_m,\Fp)=2.\]
		
		We next use the mod-$2$ cohomology spectral sequence; for the general theory of extension differentials see \cite{huebschmann-differentials}. Here \[E_2^{p,q}=H^p(T,H^q(K,\mathbb F_2)) \Longrightarrow H^{p+q}(M_m,\mathbb F_2).\] The action of $T$ on $H^*(K,\mathbb F_2)$ is trivial, because $\sigma$ is an odd-power automorphism of $K$. Hence
		\[E_2^{*,*}\cong \frac{\mathbb F_2[X,\bar y,v]}{(\bar y^2)},\]
		where $|X|=(1,0)$, $|\bar y|=(0,1)$, $|v|=(0,2)$. The classes $X$ and $\bar y$ are represented by the global characters $x$ and $y$; in particular, they are permanent cycles. Their product $X\bar y$ is nonzero in $E_2^{1,1}$ and is also a permanent cycle: there are no incoming differentials, and every outgoing differential vanishes by multiplicativity. Thus
		\[xy\ne0 \;\; \text{ in } H^2(M_m,\mathbb F_2).\]
		The classes $x^2$ and $xy$ are linearly independent, since $x^2$ restricts 	nontrivially to $T$ whereas $xy$ restricts to zero. They therefore form a basis of $H^2(M_m,\mathbb F_2)$.
		
		It follows that the class $v\in E_2^{0,2}$ must be killed by a differential. Its only possible outgoing differentials are
		\[d_2:E_2^{0,2}\longrightarrow E_2^{2,1},\]
		and
		\[d_3:E_3^{0,2}\longrightarrow E_3^{3,0}.\]
		The second is impossible, since $X^3\in E_2^{3,0}$ survives to $E_\infty$, because $x^3$ restricts nontrivially to $T$. Consequently,
		\[d_2(v)=X^2\bar y\ne0,\]
		and
		\[E_\infty^{2,1}=0.\]
		
		The cohomology class $x^2y$ belongs to filtration at least two in $H^3(M_m,\mathbb F_2)$, and the vanishing of $E_\infty^{2,1}$ places it in filtration three. This last filtration step is spanned by $x^3$. Since $x^2y$ restricts to zero on $T$, while $x^3$ does not, we conclude that $x^2y=0$.
		
		We have therefore obtained a surjection
		\[\frac{\mathbb F_2[x,y]}{(y^2,x^2y)} \longrightarrow \operatorname{Dec}(H^*(M_m,\mathbb F_2)).\]
		The source has basis $\{x^n:n\ge0\}\cup\{y,xy\}$. Its images are linearly independent; indeed, in degree one this follows from the independence of $x,y$; in degree two from the independence of $x^2,xy$; and in every degree at least three from the nonvanishing of $x^n$. The surjection is therefore an isomorphism.
	\end{proof}

	The modular boundary shows how the power relation can change the degree-one algebra without changing the dimension of $H^1$. The classes $x^n$ persist in every degree, in contrast with the finite-dimensional algebra that occurs when the carry is divisible by $4$. We now turn to that regular branch and derive its relations directly from the normal-form multiplication law.

	\section{The degree-one cohomology dichotomy}\label{sec:uniform-core}

	We first consider the regular branch, where the power-relation carry is divisible by $4$. In this case, the degree-one relations can be obtained directly from the Bockstein, using the normal-form multiplication law established in Section~\ref{sec:four-types}. The exceptional modular branch has already been determined in Theorem~\ref{thm:modular-boundary-core}, and the two calculations will be combined in the main dichotomy below.
	
	Assume that \eqref{eq:unified-presentation} has unique normal forms, $r\equiv-1\pmod4$, and either $c=0$ or $\alpha\ge3$. Proposition~\ref{prop:compatibility} gives
	\[G^{\ab}\cong C_2\times C_{2^m}.\]
	
	Define
	\[x(a^ib^j)=i\pmod2, \;\; y(a^ib^j)=j\pmod2.\]
	Lemma~\ref{lem:low-bit-multiplication} shows that $x$ and $y$ are homomorphisms. Their values on $a$ and $b$ show that they are independent, and Proposition~\ref{prop:compatibility} shows that they form a basis of $H^1(G,\Fp)$.
	
	We use the Bockstein associated to
	\[0\longrightarrow\Fp \overset{\cdot2}{\longrightarrow} \mathbb Z/4\mathbb Z \longrightarrow\Fp \longrightarrow0, \]
	for which $\beta(u)=u^2$ on $H^1(-,\Fp)$.
	
	\begin{lemma}\label{lem:y-lifts}
		The character $y$ lifts to $\mathbb Z/4\mathbb Z$, and hence $y^2=\beta(y)=0$.
	\end{lemma}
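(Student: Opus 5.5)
The plan is to write down an explicit homomorphism $\tilde y\colon G\to\mathbb Z/4\mathbb Z$ reducing to $y$ modulo $2$. Once such a lift exists, the Bockstein of $y$ vanishes, because $\beta(y)$ is the obstruction to lifting $y$ along $\mathbb Z/4\mathbb Z\to\Fp$. The identity $\beta(u)=u^2$ on $H^1(-,\Fp)$ then gives $y^2=0$.

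For the lift I take
\[\tilde y(a^ib^j)=j \pmod 4,\qquad 0\le i<2^\alpha,\ 0\le j<2^m.\]
This is well defined because normal forms are unique.

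To check that $\tilde y$ is a homomorphism, I use the product formula of Proposition~\ref{prop:normal-form-product}. The $b$-exponent of $(a^ib^j)(a^ub^v)$ in normal form is $j+v-\delta 2^m$ with $\delta\in\{0,1\}$. Since $m\ge2$, the term $2^m$ is divisible by $4$, so $\tilde y$ is additive modulo $4$. This is precisely the modulo-$4$ refinement of the second congruence in Lemma~\ref{lem:low-bit-multiplication}.

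An equivalent check is on the presentation: send $a\mapsto0$ and $b\mapsto1$ and verify the three relations.
\begin{itemize}
\item $a^{2^\alpha}=1$ maps to $0$.
\item $b^{2^m}=a^c$ maps to $2^m\equiv 0=c\cdot 0$, using $m\ge2$.
\item The conjugation relation is automatic, since the target is abelian and $a\mapsto 0$.
\end{itemize}
Reducing modulo $2$ recovers $y$.

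The only point requiring care is that the carry $\delta 2^m$ in the $b$-exponent vanishes modulo $4$. This is where $m\ge2$ is used. No hypothesis on $c$ or $\alpha$ is needed here, in contrast with the character $x$. I do not expect a genuine obstacle.
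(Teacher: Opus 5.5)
Your proposal is correct and matches the paper's proof. The paper defines the lift by $a\mapsto 0$, $b\mapsto 1$ and notes that $b^{2^m}=a^c$ is preserved because $m\ge 2$, with the other relations automatic; your normal-form check via Proposition~\ref{prop:normal-form-product} is just an equivalent way of verifying that the same map is a homomorphism.
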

	
	\begin{proof}
		Set $\widetilde y(a)=0$ and $\widetilde y(b)=1\pmod4$. Since $m\ge 2$, the left side of $b^{2^m}=a^c$ maps to $0$ modulo $4$, and the right side maps to $0$ because $\widetilde y(a)=0$. The remaining relations are automatic.
	\end{proof}
	
	\begin{lemma}\label{lem:x-no-lift}
		The character $x$ does not lift to $\mathbb Z/4\mathbb Z$. Consequently
		\[x^2=\beta(x)\ne 0.\]
	\end{lemma}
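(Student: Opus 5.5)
We show that $x$ admits no lift to $\mathbb Z/4\mathbb Z$, and then invoke the standard interpretation of the Bockstein. Suppose $\widetilde x\colon G\to\mathbb Z/4\mathbb Z$ were a homomorphism reducing to $x$ modulo $2$. Since $x(a)=1$, the value $\widetilde x(a)$ is odd, so $\widetilde x(a)=\pm1$ in $\mathbb Z/4\mathbb Z$. Apply $\widetilde x$ to the conjugation relation $b^{-1}ab=a^r$. The target is abelian, so the left side maps to $\widetilde x(a)$, while the right side maps to $r\,\widetilde x(a)$. Hence $(r-1)\widetilde x(a)\equiv 0\pmod 4$. But $r\equiv-1\pmod 4$ gives $r-1\equiv 2\pmod 4$, and then $(r-1)\widetilde x(a)\equiv 2\pmod 4$ because $\widetilde x(a)$ is odd. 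This is a contradiction.

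Equivalently, every homomorphism $G\to\mathbb Z/4\mathbb Z$ factors through $G^{\ab}\cong C_2\times C_{2^m}$ by Proposition~\ref{prop:compatibility}. In that group the image of $a$ has order $2$, so any homomorphism sends $a$ to $0$ or $2$, and its reduction modulo $2$ kills $a$. Since $x(a)=1$, no homomorphism can reduce to $x$. I would include this second formulation as a cross-check.

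Next I would pass to cohomology. Take the long exact sequence in $H^*(G,-)$ associated to $0\to\Fp\to\mathbb Z/4\mathbb Z\to\Fp\to0$. The relevant segment is
\[H^1(G,\mathbb Z/4\mathbb Z)\longrightarrow H^1(G,\Fp)\overset{\beta}{\longrightarrow}H^2(G,\Fp).\]
Because the coefficients are trivial, $H^1(G,\mathbb Z/4\mathbb Z)=\operatorname{Hom}(G,\mathbb Z/4\mathbb Z)$, and the first map is reduction modulo $2$. By exactness, $\beta(x)=0$ exactly when $x$ lifts. We have just shown that $x$ does not lift, so $\beta(x)\neq0$.

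Finally, the identity $\beta(u)=u^2$ for $u\in H^1(-,\Fp)$, recorded before Lemma~\ref{lem:y-lifts}, gives $x^2=\beta(x)\neq0$. The only step requiring care is identifying $H^1$ with $\operatorname{Hom}$ and the first map with reduction. Both are standard for trivial coefficients, so I expect no real obstacle.
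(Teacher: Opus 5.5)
Your proposal is correct and follows essentially the same route as the paper: applying a putative lift to $b^{-1}ab=a^r$ forces $2\widetilde x(a)\equiv 0\pmod 4$, which is impossible for an odd value. Your explicit use of exactness of $\operatorname{Hom}(G,\mathbb Z/4\mathbb Z)\to H^1(G,\Fp)\xrightarrow{\beta}H^2(G,\Fp)$, and the abelianization cross-check, only spell out steps the paper leaves implicit.
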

	
	\begin{proof}
		If $\widetilde x$ were a lift, then $\widetilde x(a)$ would be odd. Applying $\widetilde x$ to $b^{-1}ab=a^r$ gives
		\[\widetilde x(a)=r\widetilde x(a)\pmod4.\]
		Since $r\equiv-1\pmod4$, this would imply $2\widetilde x(a)=0\pmod4$, impossible for an odd element.
	\end{proof}

	The failure of $x$ to lift establishes that its Bockstein is nonzero, but does not yet identify the resulting degree-two class. To determine it, we use the multiplication law modulo $4$ to compute an explicit representative of $\beta(x)$.

	\begin{lemma}\label{lem:bockstein-identity}
		One has $\beta(x)=xy$.
	\end{lemma}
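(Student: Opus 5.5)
We compute $\beta(x)$ directly at the cochain level, using the set-theoretic lift of $x$ to $\mathbb Z/4\mathbb Z$ given by $\widetilde x(a^ib^j)=i\pmod 4$. By definition of the connecting homomorphism for $0\to\Fp\xrightarrow{\cdot2}\mathbb Z/4\mathbb Z\to\Fp\to0$, the class $\beta(x)$ is represented by the $2$-cocycle
\[
\beta(x)(g,h)=\tfrac12\bigl(\widetilde x(g)+\widetilde x(h)-\widetilde x(gh)\bigr)\pmod2,
\]
where the homogeneity convention for the group action on trivial coefficients makes the sign irrelevant. For $g=a^ib^j$, $h=a^ub^v$, Lemma~\ref{lem:low-bit-multiplication} gives $\widetilde x(gh)\equiv i+(-1)^ju\pmod4$. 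Hence the numerator is $u-(-1)^ju$, which is $0$ for $j$ even and $2u$ for $j$ odd. Dividing by $2$ and reducing mod $2$ yields
\[
\beta(x)(a^ib^j,a^ub^v)\equiv ju\equiv y(g)\,x(h)\pmod2 .
\]
This is exactly the standard cup-product cocycle $(y\smile x)(g,h)=y(g)x(h)$. Therefore $\beta(x)=yx=xy$, by graded commutativity in characteristic two.

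The only point needing care is the step where the carry term is discarded. Lemma~\ref{lem:low-bit-multiplication} applies precisely because we are in the regular branch ($c=0$ or $\alpha\ge3$, so $4\mid c$). It tells us that the exponent of $a$ in normal form is $i+r^{-j}u+\delta c\equiv i+(-1)^ju\pmod4$, so $\widetilde x(gh)$ is determined mod $4$ independently of the wraparound $\delta$. I expect this bookkeeping to be the only real obstacle; everything else is formal. Two consistency checks are worth doing. First, the cochain $\widetilde x$ must be well defined as a function on $G$, which holds by uniqueness of normal forms. Second, the cup-product convention $(y\smile x)(g,h)=y(g)x(h)$ must match the sign and ordering conventions for $\beta$; in characteristic two no sign issue arises. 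Finally, $xy\neq0$ is already guaranteed by Lemma~\ref{lem:x-no-lift}, which shows $\beta(x)\ne0$, so no separate nonvanishing argument is needed.
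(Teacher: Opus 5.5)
Your proposal is correct and follows the paper's proof essentially verbatim. You use the same lift $\widehat x(a^ib^j)=i \bmod 4$ and the same connecting cocycle, apply Lemma~\ref{lem:low-bit-multiplication} to get numerator $0$ or $2u$ according to the parity of $j$, and identify the result with $y\smile x=xy$ by graded commutativity in characteristic two. Your observation that the carry is harmless exactly because $4\mid c$ in the regular branch is precisely what the paper delegates to Lemma~\ref{lem:low-bit-multiplication}.
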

	
	\begin{proof}
		Use the set-theoretic lift $\widehat x(a^i b^j)=i \pmod4$. For $g=a^i b^j$ and $h=a^ub^v$, write $gh=a^wb^t$ in normal form. The connecting cocycle is
		\[(g,h)\longmapsto \frac{\widehat x(g)+\widehat x(h)-\widehat x(gh)}{2} \pmod 2.\] By Lemma~\ref{lem:low-bit-multiplication},
		\[w\equiv i+(-1)^j u \pmod 4.\]
		Hence the numerator is $0$ modulo $4$ when $j$ is even and $2u$ modulo $4$ when $j$ is odd. The resulting cocycle is
		\[(j\bmod2)(u\bmod2)=y(g)x(h),\]
		which represents $y\smile x$. Mod-$2$ group cohomology is graded-commutative, and in characteristic $2$ the degree-one sign disappears, so
		\[y\smile x=x\smile y=xy.\]
		Therefore $\beta(x)=xy$.
	\end{proof}
	
	\begin{theorem}\label{thm:lowbit-core}
		Let $G$ have a metacyclic normal-form presentation \eqref{eq:unified-presentation}, with $\alpha,m\ge 2$ and $r\equiv -1\pmod 4$. Then 
		\[c=0\;\; \text{ or } c=2^{\alpha-1},\]
		and the degree-one-generated cohomology algebra $\operatorname{Dec}(H^*(G,\Fp))$ is determined by the following dichotomy.
		\begin{enumerate}
			\item If $c\equiv0\pmod4$, equivalently if $c=0$ or $\alpha\ge3$, then
			\[\operatorname{Dec}(H^*(G,\Fp))\cong  \frac{\Fp[x,y]}{(y^2,x^2+xy)}.\]
			This algebra is four-dimensional, with $\Fp$-basis $\{1,x,y,xy\}$. 
			\item If $c\not\equiv0\pmod4$, then necessarily $\alpha=2$ and $c=2$. In this case $G$ is the modular group $M_m$, and
			\[\operatorname{Dec}(H^*(G,\Fp)) \cong \frac{ \Fp[x,y]}{(y^2,x^2y)}.\]
			This algebra is infinite-dimensional, with $\Fp$-basis $\{x^n:n\ge0\}\cup\{y,xy\}$ and Hilbert series $\displaystyle \frac{1}{1-t}+t+t^2$.\end{enumerate}
		In particular, the degree-one-generated cohomology algebra depends only on whether the power-relation parameter $c$ vanishes modulo $4$, and is independent of the remaining admissible presentation parameters.
	\end{theorem}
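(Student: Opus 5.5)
The plan is to assemble the theorem from the results already established. The constraint $c=0$ or $c=2^{\alpha-1}$ is exactly Proposition~\ref{prop:compatibility}. For the case split, note that $2^{\alpha-1}\equiv0\pmod4$ precisely when $\alpha\ge3$. Hence $c\not\equiv0\pmod4$ forces $c=2^{\alpha-1}$ with $\alpha=2$, that is $c=2$, $a^4=1$ and $r\equiv-1\pmod4$. Then $a^r=a^{-1}$, so $G$ is literally the presentation $M_m$. Case (2) is therefore Theorem~\ref{thm:modular-boundary-core} verbatim, together with Proposition~\ref{prop:modular-boundary} for modularity.

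For case (1), I would first recall that $x,y$ form a basis of $H^1(G,\Fp)$. This uses Lemma~\ref{lem:low-bit-multiplication} and $G^{\ab}\cong C_2\times C_{2^m}$. By Lemma~\ref{lem:y-lifts} we have $y^2=0$, and by Lemmas~\ref{lem:x-no-lift} and~\ref{lem:bockstein-identity} we have $x^2=\beta(x)=xy$. Commutativity of mod-$2$ cohomology then gives a surjection
\[\Fp[x,y]/(y^2,x^2+xy)\longrightarrow\operatorname{Dec}(H^*(G,\Fp)).\]
The source has basis $1,x,y,xy$. Indeed, in degree three $x^3=x\cdot xy=x^2y=xy^2=0$ and $x^2y=0$, so everything in degree $\ge3$ vanishes. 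It remains to check injectivity. Degrees $0$ and $1$ are clear, and in degree two one needs only $xy\ne0$, which holds since $xy=\beta(x)\ne0$ by Lemma~\ref{lem:x-no-lift}.

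The final sentence, independence from the remaining parameters, is then immediate, because both presentations depend only on which branch applies. I expect no real obstacle, since every ingredient is in place. The only point to be careful about is the identification of the case $c\not\equiv0\pmod 4$ with $M_m$: one must note that $r$ is only determined modulo $2^\alpha=4$, where $r\equiv-1$. One should also confirm that the hypotheses of Lemma~\ref{lem:low-bit-multiplication} (namely $c=0$ or $\alpha\ge3$) coincide exactly with $c\equiv0\pmod4$, which is the remark following Proposition~\ref{prop:compatibility}.
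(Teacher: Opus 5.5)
Your proposal is correct and follows the paper's proof essentially step for step. It uses Proposition~\ref{prop:compatibility} for the two possible values of $c$. For the regular branch it uses Lemmas~\ref{lem:y-lifts}--\ref{lem:bockstein-identity}, the vanishing of all degree-three monomials, and $xy=\beta(x)\neq0$. For the boundary case it uses Theorem~\ref{thm:modular-boundary-core}.

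Your explicit remark that $a^4=1$ and $r\equiv-1\pmod 4$ give $a^r=a^{-1}$, so that $G$ is literally the presentation $M_m$, is a slightly cleaner justification of that identification than the paper's citation of Proposition~\ref{prop:modular-boundary}.
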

	
	\begin{proof}
		Proposition~\ref{prop:compatibility} shows that presentation compatibility forces $c=0$ or $c=2^{\alpha-1}$ and gives
		\[G^{\mathrm{ab}}\cong C_2\times C_{2^m}.\]
		Consequently, the presentation-adapted characters $x,y$ form a basis of $H^1(G,\Fp)$.
		
		Suppose first that $c\equiv0\pmod4$. By the compatibility condition, this occurs precisely when $c=0$ or $\alpha\ge3$. Lemmas~\ref{lem:y-lifts}, \ref{lem:x-no-lift}, and \ref{lem:bockstein-identity} give $y^2=0$, and $x^2=xy\ne 0$. These relations induce a surjective homomorphism of graded $\Fp$-algebras
		\[\frac{\Fp[x,y]}{(y^2,x^2+xy)} \longrightarrow \operatorname{Dec}(H^*(G,\Fp)).\]
		In the source algebra, every monomial of degree at least $3$ vanishes, since $x^3=x(xy)=x^2y=xy^2=0$. Moreover, the degree-two component is spanned by $xy$. Since $x$ and $y$ are linearly independent in $H^1(G,\Fp)$ and $xy\ne0$, the images of $1,x,y,xy$ are linearly independent. The homomorphism is therefore an isomorphism, giving the first assertion.
		
		Now suppose that $c\not\equiv0\pmod4$. Since $c=0$ or $c=2^{\alpha-1}$, this forces $\alpha=2$ and $c=2$. By Proposition~\ref{prop:modular-boundary}, the corresponding group is the modular group $M_m$. The exact calculation in Theorem~\ref{thm:modular-boundary-core} gives
		\[\operatorname{Dec}(H^*(M_m,\Fp)) \cong \frac{\Fp[x,y]}{(y^2,x^2y)}.\]
		The defining relations show that this algebra has basis $\{x^n:n\ge0\}\cup\{y,xy\}$. In particular, it is infinite-dimensional, and its Hilbert series is
		\[ \sum_{n\ge0}t^n+t+t^2= \frac{1}{1-t} +t+ t^2. \]
		
		The two cases exhaust all compatible values	of $c$. Their algebra presentations depend only on the residue of $c$ modulo $4$, establishing the asserted dichotomy.
	\end{proof}

	The uniformity in Theorem~\ref{thm:lowbit-core} suggests a common group-theoretic source for the degree-one algebra. Since the relevant multiplication law depends only on the presentation modulo $4$, we retain the fourth-power residues of the two generators. The resulting quotient makes the cohomological identification compatible with inflation.

	\begin{proposition}\label{prop:lowbit-quotient}
		Let $G$ have a metacyclic normal-form presentation \eqref{eq:unified-presentation}, with $\alpha,m\ge 2$, $r\equiv-1\pmod4$, and $c\equiv0\pmod4$. Let
		\[H=\langle A,B\mid A^4=B^4=1,\; B^{-1}AB=A^{-1}\rangle \cong C_4\rtimes_{-1}C_4. \]
		Then the assignments
		\[a\longmapsto A,\;\; b\longmapsto B\]
		define a surjective homomorphism
		\[\rho_G:G\twoheadrightarrow H\]
		with
		\[\ker(\rho_G)=\langle\!\langle a^4,b^4\rangle\!\rangle_G.\]
		Moreover, the induced inflation homomorphism restricts to an isomorphism of graded $\Fp$-algebras
		\[\rho_G^*: \operatorname{Dec}(H^*(H,\Fp)) \xrightarrow{\;\sim\;} \operatorname{Dec}(H^*(G,\Fp)).\]
		Thus every group in the regular branch has the same degree-one-generated cohomology algebra as the fixed group $H$ of order $16$, and this identification is induced by the presentation-defined quotient map.
	\end{proposition}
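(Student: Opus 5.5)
First I check that $\rho_G$ is well defined by verifying that the defining relations of \eqref{eq:unified-presentation} map to relations of $H$. The relation $a^{2^\alpha}=1$ goes to $A^{2^\alpha}=1$, which holds because $\alpha\ge2$. The power relation $b^{2^m}=a^c$ goes to $B^{2^m}=A^c$. Both sides are trivial, since $m\ge2$ and $4\mid c$. For the conjugation relation, $B^{-1}AB=A^{-1}$ gives $B^{-1}AB=A^{r}$, because $r\equiv-1\pmod4$ and $A$ has order $4$. So $\rho_G$ is a homomorphism. It is surjective because $A,B$ generate $H$.

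Next I identify the kernel. Let $N=\langle\!\langle a^4,b^4\rangle\!\rangle_G$. Clearly $N\subseteq\ker\rho_G$. For the reverse inclusion, I would show that $G/N$ has order at most $16$. In $G/N$ the images satisfy $\bar a^4=\bar b^4=1$ and $\bar b^{-1}\bar a\bar b=\bar a^{r}=\bar a^{-1}$. Hence $G/N$ is a quotient of $H$, and so $|G/N|\le|H|=16$. Because $\rho_G$ factors through $G/N\twoheadrightarrow H$, these maps are isomorphisms and $\ker\rho_G=N$. Here $|H|=16$ comes from the normal forms $A^iB^j$, since $H$ is a genuine semidirect product $C_4\rtimes C_4$.

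For the cohomology, I would first note that $H$ itself satisfies the hypotheses of Theorem~\ref{thm:lowbit-core}(1): take $\alpha=m=2$, $r=-1$, $c=0$ (Type~1). So $\operatorname{Dec}(H^*(H,\Fp))\cong\Fp[X,Y]/(Y^2,X^2+XY)$, where $X(A^iB^j)=i$ and $Y(A^iB^j)=j \bmod 2$. Inflation is a map of graded algebras, so it carries $\operatorname{Dec}(H^*(H))$ into $\operatorname{Dec}(H^*(G))$. Under $\rho_G$ we have $\rho_G(a^ib^j)=A^iB^j$, read with exponents reduced mod $4$. Therefore $\rho_G^*X=x$ and $\rho_G^*Y=y$, since reduction mod $4$ preserves parity. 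The restricted map thus sends generators to generators and is surjective onto $\operatorname{Dec}(H^*(G,\Fp))$. Both algebras are four-dimensional by Theorem~\ref{thm:lowbit-core}(1), and a surjection between finite-dimensional spaces of equal dimension is an isomorphism. For a degreewise check: $\rho_G^*(XY)=xy\ne0$, and both algebras vanish in degrees $\ge3$.

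The main obstacle is the kernel identification, and specifically the order count $|G/N|\le16$. There one must make sure that no additional relation in $G$ survives beyond those of $H$. The argument above handles this by exhibiting $G/N$ as a quotient of $H$, since all relations of $H$ already hold there. Everything else reduces to Theorem~\ref{thm:lowbit-core} and the naturality of cup products under inflation.
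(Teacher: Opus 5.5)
Your proposal is correct and takes the same route as the paper. You check the relations, then bound $|G/N|\le 16$ by exhibiting $G/N$ as a quotient of $H$, and you conclude from $\rho_G^*x_H=x_G$, $\rho_G^*y_H=y_G$ together with the four-dimensionality of both algebras given by Theorem~\ref{thm:lowbit-core}. Your ``surjective between equal finite dimensions'' step is just a rephrasing of the paper's observation that $\rho_G^*$ carries the basis $\{1,x_H,y_H,x_Hy_H\}$ onto $\{1,x_G,y_G,x_Gy_G\}$.
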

	
	\begin{proof}
		We first verify that the proposed assignments respect the defining relations of $G$. Since $\alpha\ge2$, the relation $a^{2^\alpha}=1$ maps to
		\[A^{2^\alpha}=1.\]Likewise, $m\ge2$ implies\[B^{2^m}=1.\] Because $c\equiv0\pmod4$, we also have $A^c=1$, so the power relation $b^{2^m}=a^c$ is preserved. Finally, since $r\equiv-1\pmod4$, \[B^{-1}AB=A^{-1}=A^r.\]
		Hence the assignments extend to a homomorphism $\rho_G:G\to H$. Its image contains both $A$ and $B$, so $\rho_G$ is surjective.
		
		Let
		\[N=\langle\!\langle a^4,b^4\rangle\!\rangle_G.\]
		Clearly $N\subseteq\ker(\rho_G)$. In the quotient $G/N$, the images of $a$ and $b$ have orders dividing $4$. The relation $b^{-1}ab=a^r$ becomes $b^{-1}ab=a^{-1}$, since $r\equiv-1\pmod4$. Moreover, the power relation becomes trivial because $c\equiv0\pmod4$. Consequently, $G/N$ is generated by the images of $a$ and $b$ subject to the relations defining $H$. These relations give a surjective homomorphism $H\twoheadrightarrow G/N$, while $\rho_G$ induces a surjective homomorphism $G/N\twoheadrightarrow H$. Since $H$ is finite, both maps are isomorphisms. Therefore
		\[G/N\cong H,\;\;\ker(\rho_G)=N.\]
		
		We now consider the induced map on cohomology. Let $x_H,y_H\in H^1(H,\Fp)$ be the characters determined by 
		\[x_H(A)=1,\;\; x_H(B)=0,\;\; y_H(A)=0,\;\; y_H(B)=1.\]
		Let $x_G,y_G\in H^1(G,\Fp)$ be the corresponding presentation-adapted characters. By construction,
		\[\rho_G^*(x_H)=x_G,\;\;\rho_G^*(y_H)=y_G.\]
		Since both pairs form bases of their respective first cohomology groups, $\rho_G^*$ induces an isomorphism
		\[H^1(H,\Fp)\xrightarrow{\;\sim\;}H^1(G,\Fp).\]
		
		By Theorem~\ref{thm:lowbit-core}, both $H$ and $G$ belong to the regular branch, and hence
		\[\operatorname{Dec}(H^*(H,\Fp)) \cong \frac{\Fp[x_H,y_H]} {(y_H^2,x_H^2+x_Hy_H)}\]
		and
		\[\operatorname{Dec}(H^*(G,\Fp)) \cong \frac{\Fp[x_G,y_G]} {(y_G^2,x_G^2+x_Gy_G)}.\]
		In particular, both algebras are four-dimensional, with respective bases
		\[\{1,x_H,y_H,x_Hy_H\} \text{ and }\{1,x_G,y_G,x_Gy_G\}.\]
		
		Naturality of the cup product gives
		\[\rho_G^*(x_Hy_H)=x_Gy_G.\] Thus $\rho_G^*$ maps the displayed basis of the first algebra onto the displayed basis of the second. It follows that its restriction to the degree-one-generated subalgebras is an isomorphism, as claimed.
	\end{proof}

	\begin{corollary}\label{cor:uniform-core}
		For every metacyclic-nonmodular $2$-group in Types~1--4 of Aaboun--Zekhnini,
		\[\operatorname{Dec}\left(H^*(G,\Fp)\right)\cong\Fp[x,y]/(y^2,x^2+xy).\]
		Thus the degree-one-generated algebra is independent of the refined type and all admissible parameters.
	\end{corollary}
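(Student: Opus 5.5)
The corollary should follow by checking that each row of the Aaboun--Zekhnini table satisfies the hypotheses of part (1) of Theorem~\ref{thm:lowbit-core}. That theorem needs a metacyclic normal-form presentation \eqref{eq:unified-presentation} with $\alpha,m\ge2$, $r\equiv-1\pmod4$, and $c\equiv0\pmod4$. So I would first recall from \cite[Proposition~2.4]{aaboun-zekhnini} that each Type~1--4 group is given by a presentation of this shape, with cyclic normal subgroup $\langle a\rangle\cong C_{2^\alpha}$, cyclic quotient of order $2^m$, and unique normal forms $a^ib^j$. Here $m\ge2$ is exactly the condition that the abelianization has type $(2,2^m)$ and not the elementary abelian type $(2,2)$, which is excluded in the classification. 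I would also note that $|G|=2^{\alpha+m}$, which is what the uniqueness of normal forms requires.

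Next I would verify the two congruences row by row.
\begin{itemize}
\item For $r$: in Types~1 and~2, $r=-1$ directly. In Types~3 and~4, $r=-1+2^s$ with $s\ge2$; this holds in every admissible case, since either $2\le s\le\alpha-2$ or $s=\alpha-1\ge2$ when $\alpha\ge3$. Hence $r\equiv-1\pmod4$ throughout.
\item For $c$: in the split Types~1 and~3, $c=0$. In the nonsplit Types~2 and~4, $c=2^{\alpha-1}$ with $\alpha\ge3$ (indeed $\alpha\ge4$ in Type~4), so $4\mid c$.
\end{itemize}
Thus every type lies in the regular branch, and Theorem~\ref{thm:lowbit-core}(1) gives $\operatorname{Dec}(H^*(G,\Fp))\cong\Fp[x,y]/(y^2,x^2+xy)$. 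The right-hand side makes no reference to $r$, $s$, $\alpha$, or $m$, which gives the independence statement. As a uniform restatement, Proposition~\ref{prop:lowbit-quotient} realizes all of these algebras as inflations from the single group $C_4\rtimes_{-1}C_4$.

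The main obstacle I expect is the bookkeeping in the first step, not the cohomology. One must make sure that the conventions of \cite{aaboun-zekhnini} agree with \eqref{eq:unified-presentation}: conjugation written as $b^{-1}ab$ rather than $bab^{-1}$, and the exponent of $a$ in the power relation equal to $c$ and not $c$ times a unit. If the conjugation convention is reversed, $r$ is replaced by $r^{-1}$, which is still $\equiv-1\pmod4$. If $b$ or $a$ is replaced by a power with odd exponent, the residues mod $4$ of $r$ and $c$ are unchanged. So the hypotheses are robust under these reparametrizations, and I would remark on this to close the argument.
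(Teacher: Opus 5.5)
Your proposal is correct and matches the paper's proof: both check that every Aaboun--Zekhnini presentation has $r\equiv-1\pmod 4$ and $c\equiv0\pmod 4$ (that is, none is the boundary case $\alpha=2$, $c=2$), and then invoke Theorem~\ref{thm:lowbit-core}(1). Your row-by-row checks ($s\ge2$ in Types~3--4 and $\alpha\ge3$ in the nonsplit types) and your remark on changes of convention spell out the paper's one-line verification explicitly.
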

	
	\begin{proof}
		Each of the four source presentations has $r\equiv-1\pmod4$, and none is the exceptional $\alpha=2$, $c=2$ boundary. Theorem~\ref{thm:lowbit-core} applies.
	\end{proof}

	\begin{remark}
		Theorem~\ref{thm:lowbit-core} isolates the low-bit mechanism behind the degree-one relations across the refined metacyclic types. The full cohomology calculations of Rusin and Huebschmann describe the corresponding ambient rings. The normal-form calculation used here makes the uniformity transparent: the congruence $r\equiv-1\pmod4$ fixes the conjugation contribution, while the compatibility condition on $c$ controls the power-relation carry modulo $4$. \end{remark}
	
	\begin{corollary}\label{cor:bockstein-table}
		In the regular branch,
		\[ \beta(y)=0, \;\; \beta(x) = xy, \;\; \beta(x+y)=xy.\]
		Among the three nonzero elements of $H^1(G,\Fp)$, exactly one has vanishing Bockstein.
	\end{corollary}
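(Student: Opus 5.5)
The plan is to read off all three identities from results already established, using only that the Bockstein $\beta\colon H^1(G,\Fp)\to H^2(G,\Fp)$ is additive. Lemma~\ref{lem:y-lifts} gives $\beta(y)=0$, since $y$ lifts to $\mathbb Z/4\mathbb Z$. Lemma~\ref{lem:bockstein-identity} gives $\beta(x)=xy$. Additivity of the connecting homomorphism then yields
\[\beta(x+y)=\beta(x)+\beta(y)=xy.\]
As a consistency check, we can also compute via $\beta(z)=z^2$ on degree-one classes. Graded commutativity in characteristic $2$ gives $(x+y)^2=x^2+y^2$, and by Theorem~\ref{thm:lowbit-core}(1) this equals $xy+0=xy$.

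For the counting statement, I will use that by Proposition~\ref{prop:compatibility} and the discussion preceding Lemma~\ref{lem:y-lifts}, $x$ and $y$ form a basis of $H^1(G,\Fp)\cong\Fp^2$. The three nonzero elements are therefore exactly $x$, $y$ and $x+y$. It remains to see that $xy\neq0$. This holds by Lemma~\ref{lem:x-no-lift}, which gives $\beta(x)\ne0$, combined with $\beta(x)=xy$; equivalently, it follows from the four-dimensional basis $\{1,x,y,xy\}$ in Theorem~\ref{thm:lowbit-core}. Hence $\beta$ vanishes on $y$ and is nonzero on both $x$ and $x+y$, so exactly one nonzero class has vanishing Bockstein.

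There is no real obstacle. The only point needing care is that additivity of $\beta$ is used in place of any new cocycle computation, and that the basis statement for $H^1$ comes from the abelianization $G^{\ab}\cong C_2\times C_{2^m}$.
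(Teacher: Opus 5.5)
Your proposal is correct and is essentially the paper's route: the paper gives no separate proof, treating the corollary as immediate from Lemmas~\ref{lem:y-lifts}, \ref{lem:x-no-lift}, and~\ref{lem:bockstein-identity}. Those give $\beta(y)=0$ and $\beta(x)=xy\neq0$, and additivity of $\beta$ (equivalently $(x+y)^2=x^2+y^2$) together with the basis $\{x,y\}$ of $H^1(G,\Fp)$ gives the rest, exactly as you argue.
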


	\section{Koszulity and Quadratic Duality}\label{sec:koszul}

	The explicit algebra presentations allow us to compare homological properties in the two branches. Related enhanced-Koszul questions have been studied for right-angled Artin groups and in Galois cohomology \cite{cassella-quadrelli,minac-palaisti-pasini-tan}. The regular algebra is defined by quadratic relations, while the modular boundary has a minimal cubic relation. We first establish the enhanced Koszul properties of the regular algebra and then show that the same low-bit distinction determines Koszulity.

	We use the colon criterion for universal Koszulity as developed by Conca \cite{conca-universal} and discussed by Conca--De Negri--Rossi \cite{conca-de-negri-rossi}. For strong Koszulity we use the notion introduced by Herzog--Hibi--Restuccia \cite{herzog-hibi-restuccia}, also used in the enhanced-Koszul literature \cite{minac-palaisti-pasini-tan}.

	\begin{proposition}\label{prop:koszul-package}
		The algebra $\Fp[x,y]/(y^2,x^2+xy)$ is PBW and therefore Koszul. It is also universally Koszul and strongly Koszul.
	\end{proposition}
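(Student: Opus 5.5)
The plan is to prove each of the four properties by a short finite computation. This is possible because $R:=\Fp[x,y]/(y^2,x^2+xy)$ is tiny: by the proof of Theorem~\ref{thm:lowbit-core}, it has basis $\{1,x,y,xy\}$ and $\mathfrak m^3=0$, where $\mathfrak m=(x,y)$. All multiplications reduce to three products,
\[x\cdot x=xy,\qquad x\cdot y=xy,\qquad y\cdot y=0,\]
so every colon ideal can be read off directly.

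\textbf{PBW and Koszulity.} I would show that the defining quadratic relations already form a Gröbner basis. Take the lexicographic (or degree-lexicographic) order with $x>y$. The leading terms of $y^2$ and $x^2+xy$ are $y^2$ and $x^2$. These are coprime monomials, so by Buchberger's first criterion their $S$-polynomial reduces to zero, and $\{y^2,\,x^2+xy\}$ is a quadratic Gröbner basis. Hence $R$ is G-quadratic, that is PBW in the commutative sense.

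If the noncommutative PBW notion is intended, I would instead present $R$ as the quotient of the free algebra $\Fp\langle x,y\rangle$ by the relations
\[xy+yx,\qquad y^2,\qquad x^2+xy.\]
In degree-lexicographic order with $x>y$, the leading words are $x^2$, $xy$ and $y^2$. I would then check the overlaps $x^3$, $x^2y$, $xy^2$, $y^3$ in degree three. This amounts to verifying that the normal words $1,x,y,yx$ give the correct Hilbert series $1+2t+t^2$ with nothing in degree three, which matches $\dim R=4$. In either version, the PBW property implies Koszulity by the standard theorem of Priddy.

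\textbf{Universal Koszulity.} I would apply Conca's colon criterion to the family $\mathcal F$ of all ideals generated by linear forms. Over $\Fp$ these are only
\[0,\qquad (x),\qquad (y),\qquad (x+y),\qquad \mathfrak m.\]
The required colons are computed from the multiplication table:
\[0:x=(x+y),\qquad 0:(x+y)=(x),\qquad 0:y=(y),\]
and
\[(x):y=(y):x=(x+y):x=\mathfrak m.\]
For example, $(y)=\operatorname{span}\{y,xy\}$ contains $x\cdot x$ and $y\cdot x$, so $(y):x=\mathfrak m$. With these values, every nonzero $I\in\mathcal F$ can be written as $(J,\ell)$ with $J\in\mathcal F$ and $J:\ell\in\mathcal F$: take $J=0$ for the principal ideals, and $J=(x)$, $\ell=y$ for $\mathfrak m$. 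The criterion then gives universal Koszulity.

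\textbf{Strong Koszulity.} In the sense of Herzog--Hibi--Restuccia, I need a basis of $R_1$ such that every colon of an ideal generated by a subset of the basis, taken by a basis element, is again generated by a subset of the basis. The basis $\{x,y\}$ fails, since $0:x=(x+y)$. The basis $\{y,x+y\}$ also fails, since $0:(x+y)=(x)$. I would therefore choose $\{x,\,x+y\}$ and verify
\[0:x=(x+y),\qquad 0:(x+y)=(x),\qquad (x):(x+y)=\mathfrak m,\qquad (x+y):x=\mathfrak m,\]
together with the trivial colons by elements already in the ideal. Each of these is generated by a subset of $\{x,x+y\}$, so $R$ is strongly Koszul.

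The main obstacle is bookkeeping: getting each colon exactly right in characteristic $2$, and matching the precise definitions used by Conca and by Herzog--Hibi--Restuccia. Finding the right basis for strong Koszulity is the one non-obvious choice.
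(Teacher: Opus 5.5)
Your proposal is correct and follows essentially the same route as the paper: quadratic relations whose leading terms are $x^2$ and $y^2$ form a Gr\"obner basis for PBW, Conca's colon criterion is applied to the linear ideals $0,(x),(y),(x+y),\mathfrak m$ with the same colon values, and the basis $\{x,x+y\}$ gives strong Koszulity. Your explicit witnesses for the Koszul-filtration condition ($J=0$ for the principal ideals, and $J=(x)$ with $\ell=y$ for $\mathfrak m$) state the universal-Koszul step slightly more precisely than the paper's blanket list of colons does.
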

	
	\begin{proof}
		With degree-lexicographic order $x>y$, the two quadrics $x^2+xy, \;\; y^2$ form a Gr\"{o}bner basis: their only $S$-polynomial is $xy^3$, which reduces to zero. Thus the algebra is PBW.
		
		For universal Koszulity, set $z=x+y$. The nonzero proper ideals generated by degree-one elements are $(x),\;\;(y),\;\;(z),\;\;\mathfrak m=(x,y)$. Using
		\[x^2=xy, \;\; y^2=0, \;\; z^2=xy, \;\; xz=0, \;\; yz=xy,\]
		one obtains
		\[(0:x)=(z), \;\; (0:y)=(y), \;\; (0:z)=(x),\]
		and, whenever $\ell$ is a nonzero linear form outside the degree-one part of one of $(x),(y),(z)$,
		\[((x):\ell)=((y):\ell)=((z):\ell)=\mathfrak{m}\]
		in the applicable cases. Hence the standard colon criterion gives universal Koszulity.
		
		For strong Koszulity use the basis $\{x,z\}$. Then
		\[ \Fp[x,y]/(y^2,x^2+xy) \cong\Fp[x,z]/(xz,x^2+z^2),\]
		and
		\[ (0:x)=(z), \;\; (0:z)=(x), \;\; (x):z=(x,z), \;\; (z):x=(x,z).\]
		Every required colon is generated by a subset of the same basis.
	\end{proof}

	\begin{corollary}\label{cor:koszulity-dichotomy}
		Let $G$ have a metacyclic normal-form presentation \eqref{eq:unified-presentation}, with $\alpha,m\ge2$ and $r\equiv-1\pmod4$. Then the degree-one-generated cohomology algebra $\operatorname{Dec}(H^*(G,\Fp))$ is Koszul if and only if $c\equiv0\pmod4$.
		
		More precisely, if $c\equiv0\pmod4$, then $\operatorname{Dec}(H^*(G,\Fp))$ is PBW, Koszul, universally Koszul, and strongly Koszul. If $c\not\equiv0\pmod4$, then necessarily $\alpha=2$ and $c=2$, and the corresponding algebra is not quadratic and hence is not Koszul. In particular, all four nonmodular metacyclic types of Aaboun--Zekhnini lie in the Koszul branch.
	\end{corollary}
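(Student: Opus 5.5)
The argument splits according to the dichotomy of Theorem~\ref{thm:lowbit-core}. That theorem already shows that under the standing hypotheses $c\in\{0,2^{\alpha-1}\}$. It also shows that $c\not\equiv0\pmod4$ happens exactly when $\alpha=2$ and $c=2$, i.e.\ when $G=M_m$ (Proposition~\ref{prop:modular-boundary}). So it suffices to treat the two branches separately.

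In the regular branch $c\equiv0\pmod4$, Theorem~\ref{thm:lowbit-core}(1) identifies $\operatorname{Dec}(H^*(G,\Fp))$ with $\Fp[x,y]/(y^2,x^2+xy)$. Proposition~\ref{prop:koszul-package} then gives PBW, Koszul, universally Koszul and strongly Koszul at once. One point deserves explicit mention: these properties are intrinsic to the graded algebra, since they are invariant under graded isomorphism and under linear change of degree-one generators, so they transfer through the isomorphism. The final sentence about Aaboun--Zekhnini types is then Corollary~\ref{cor:uniform-core}, or equivalently the observation that every row of the table has $r\equiv-1\pmod4$ and $c\equiv0\pmod4$.

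In the modular branch, Theorem~\ref{thm:modular-boundary-core} gives $\operatorname{Dec}(H^*(M_m,\Fp))\cong\Fp[x,y]/(y^2,x^2y)$. I must show this algebra is not quadratic, i.e.\ that its ideal of relations is not generated in degree two. The relation ideal is the kernel $I$ of $\Fp[x,y]\to\operatorname{Dec}$. Its degree-two part consists of those quadrics vanishing in $H^2$. Since $x^2$ and $xy$ are linearly independent in $H^2(M_m,\Fp)$, this degree-two part is exactly $\Fp\,y^2$.

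The ideal $(y^2)$ contains, in degree three, only $xy^2$ and $y^3$. But $x^2y\in I_3$, and $x^2y\notin(y^2)$, because $\Fp[x,y]/(y^2)$ has basis $x^n, x^ny$ and $x^2y$ is a nonzero basis element there. Hence $I\ne(I_2)$, the algebra is not quadratic, and since Koszul algebras are by definition (or by the standard fact that Koszulity forces a quadratic presentation, via $\operatorname{Ext}^{2}$ concentrated in internal degree $2$) quadratic, it is not Koszul.

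Combining the two branches yields the biconditional. The only step needing care is the minimality of the cubic relation. It depends on the precise degree-two computation $\dim H^2=2$ with basis $x^2,xy$, which was already established in Theorem~\ref{thm:modular-boundary-core}, so I expect no genuine obstacle.
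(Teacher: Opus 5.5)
Your proposal is correct and follows essentially the same route as the paper. You split along Theorem~\ref{thm:lowbit-core}, invoke Proposition~\ref{prop:koszul-package} in the regular branch, and in the modular branch show that $x^2y$ is a minimal cubic relation outside $(y^2)$, so $\Fp[x,y]/(y^2,x^2y)$ is not quadratic and hence not Koszul; your explicit check that the degree-two part of the relation ideal is exactly $\Fp\,y^2$, using the independence of $x^2$ and $xy$ in $H^2(M_m,\Fp)$, spells out a step the paper only asserts.
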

	
	\begin{proof}
		By Theorem~\ref{thm:lowbit-core}, the degree-one-generated cohomology algebra has precisely two possible presentations, depending on whether $c$ vanishes modulo $4$.
		
		Suppose first that $c\equiv0\pmod4$. Then
		\[\operatorname{Dec}(H^*(G,\Fp))\cong\Fp[x,y]/(y^2,x^2+xy).\]
		The preceding proposition shows that this algebra is PBW, Koszul, universally Koszul, and strongly Koszul.
		
		Now suppose that $c\not\equiv0\pmod4$. Presentation compatibility forces $\alpha=2$ and $c=2$, and Theorem~\ref{thm:lowbit-core} gives
		\[S:=\operatorname{Dec}(H^*(G,\Fp))\cong\Fp[x,y]/(y^2,x^2y).\]
		The relation $x^2y=0$ is a minimal cubic relation. Indeed, the only quadratic relation is $y^2=0$, and $x^2y$ remains nonzero in $\Fp[x,y]/(y^2)$. Consequently, the defining ideal of $S$ cannot be generated by quadratic relations, so $S$ is not quadratic. Since every connected standard graded Koszul algebra is quadratic (see \cite{priddy}), $S$ is not Koszul.
		
		The failure of universal Koszulity can also be seen directly from the colon criterion. In $S$ one has $(0:x)=(xy)$. Since $xy\ne0$ and $(xy)$ contains no nonzero element of degree one, this annihilator is not generated by linear forms. Thus $S$ fails the universal Koszul colon criterion.
		
		Finally, the PBW, universal Koszul, and strong Koszul properties each imply Koszulity. Hence none of these enhanced properties holds in the exceptional branch. This establishes the dichotomy.
	\end{proof}

	Koszulity makes the quadratic dual a natural object to consider, since it describes the Yoneda algebra of the degree-one core; see \cite{beilinson-ginzburg-soergel,polishchuk-positselski} for general treatments of quadratic duality. In the regular branch, the dual admits a familiar noncommutative presentation, connecting the cohomology calculation with the characteristic-two Jordan plane \cite{iyudu-jordan-plane}.

	\begin{proposition}\label{prop:regular-quadratic-dual}
		Let
		\[R=\Fp[x,y]/(y^2,x^2+xy),\;\; |x|=|y|=1.\]
		Then its quadratic dual is
		\[R^!\cong\frac{\Fp\langle U,V\rangle}{(VU+UV+U^2)},\;\; |U|=|V|=1.\]
		In particular, $R^!$ is the Jordan plane in characteristic two. It has an $\Fp$-basis $\{U^iV^j:i,j\ge0\}$ and Hilbert series
		\[H_{R^!}(t)=\frac{1}{(1-t)^2}.\]
		Since $R$ is Koszul, its Yoneda algebra satisfies
		\[\operatorname{Ext}_R^*(\Fp,\Fp)\cong R^!,\;\; \dim_{\Fp}\operatorname{Ext}_R^n(\Fp,\Fp)=n+1\;\;(n\ge0).\]
		Consequently, this description applies to the degree-one-generated cohomology algebra of every group in the regular branch of Theorem~\ref{thm:lowbit-core}.
	\end{proposition}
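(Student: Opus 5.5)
The computation follows the standard recipe for quadratic duals, with the sign conventions trivialized by characteristic two. Write $R=T(W)/(I)$, where $W=\Fp x\oplus\Fp y$ and $I\subset W\otimes W$ is the two-dimensional span of the noncommutative relations. We must first make these relations explicit in the tensor algebra. Commutativity contributes $xy+yx$, and the defining relations contribute $y^2$ and $x^2+xy$. Thus
\[I=\operatorname{span}\{xy+yx,\ y^2,\ x^2+xy\},\]
which is three-dimensional, so $\dim R_2=4-3=1$, matching the basis $\{xy\}$ found in Theorem~\ref{thm:lowbit-core}. Let $U,V$ be the dual basis of $W^*$, so $U(x)=1$, $U(y)=0$, $V(x)=0$, $V(y)=1$. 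The pairing on $W^*\otimes W^*$ and $W\otimes W$ is then the product pairing. The dual $R^!$ is $T(W^*)/(I^\perp)$, where $I^\perp$ is one-dimensional.

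The key step is to compute $I^\perp$. Write a general element $\lambda_1U^2+\lambda_2UV+\lambda_3VU+\lambda_4V^2$. The three orthogonality conditions are:
\begin{itemize}
\item pairing with $xy+yx$ gives $\lambda_2+\lambda_3=0$;
\item pairing with $y^2$ gives $\lambda_4=0$;
\item pairing with $x^2+xy$ gives $\lambda_1+\lambda_2=0$.
\end{itemize}
Hence $I^\perp$ is spanned by $U^2+UV+VU$, which yields the stated presentation. With the substitution $X=U$, $Y=V$, the relation reads $YX=XY+X^2$ in characteristic two, which is the Jordan plane relation.

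Next we find a basis. Take the degree-lexicographic order with $V>U$, so the leading word of the relation is $VU$, giving the rewriting rule $VU\to UV+U^2$. A single leading word $VU$ has no self-overlaps (no suffix of $VU$ equals a prefix of $VU$), so the diamond lemma applies with no ambiguities to resolve. The normal words are those avoiding $VU$, namely $U^iV^j$. This gives the basis $\{U^iV^j\}$, so $\dim R^!_n=n+1$ and the Hilbert series is $1/(1-t)^2$.

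Finally, Proposition~\ref{prop:koszul-package} shows that $R$ is Koszul, and the standard theorem \cite{priddy,polishchuk-positselski} then gives $\operatorname{Ext}_R^*(\Fp,\Fp)\cong R^!$, with the diagonal Ext being all of Ext. As a consistency check, the Koszul numerical identity $H_R(t)H_{R^!}(-t)=1$ holds, since $(1+2t+t^2)/(1+t)^2=1$. The last sentence of the proposition follows from Theorem~\ref{thm:lowbit-core}(1) together with Proposition~\ref{prop:lowbit-quotient}. The only real obstacle is bookkeeping: the commutativity relation $xy+yx$ must be included in $I$, since omitting it would produce the wrong dual.
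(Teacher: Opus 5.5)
Your proposal is correct and follows the paper's proof essentially step for step. You use the same three-dimensional relation space including the commutator $xy+yx$, the same orthogonality conditions yielding $U^2+UV+VU$, the same reduction $VU\mapsto UV+U^2$ (via the diamond lemma, which the paper cites alongside the Ore-extension description) to get the basis $\{U^iV^j\}$, and the same appeal to Proposition~\ref{prop:koszul-package} and Theorem~\ref{thm:lowbit-core}; the only blemish is your opening description of $I$ as ``two-dimensional,'' which you correct to three-dimensional a few lines later.
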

	
	\begin{proof}
		Let $W=\Fp x\oplus\Fp y$. To compute the quadratic dual, write $R$ as a quotient of the tensor algebra $T(W)$. Since $R$ is commutative, its space of quadratic relations is
		\[J=\langle y\otimes y,\ x\otimes x+x\otimes y,\ x\otimes y+y\otimes x\rangle\subseteq W\otimes W.\]
		Let $U,V$ be the dual basis of $W^*$. An element
		\[\lambda\,U\otimes U+\mu\,U\otimes V+\nu\,V\otimes U+\kappa\,V\otimes V\]
		belongs to the orthogonal complement $J^\perp$ precisely when
		\[\kappa=0,\;\; \lambda+\mu=0,\;\; \mu+\nu=0.\]
		Thus $J^\perp$ is generated by $U\otimes U+U\otimes V+V\otimes U$, and the definition of the quadratic dual gives
		\[R^!=T(W^*)/(J^\perp)\cong\Fp\langle U,V\rangle/(VU+UV+U^2).\]
		
		Equivalently, its defining relation is $VU=UV+U^2$. Let $\delta$ be the derivation of $\Fp[U]$ determined by $\delta(U)=U^2$. Then the same relation presents the Ore extension
		\[R^!\cong\Fp[U][V;\delta].\]
		The Ore-extension normal form, equivalently obtained by the reduction $VU\mapsto UV+U^2$, gives the $\Fp$-basis $\{U^iV^j:i,j\ge0\}$ (see \cite{mcconnell-robson,bergman}). Hence its degree-$n$ component has dimension $n+1$, and
		\[H_{R^!}(t)=\sum_{n\ge0}(n+1)t^n=\frac{1}{(1-t)^2}.\]
		
		On the other hand, the relations of $R$ give the basis $\{1,x,y,xy\}$, so $H_R(t)=(1+t)^2$ and
		\[H_R(t)H_{R^!}(-t)=1.\]
		By Proposition~4.1, $R$ is Koszul. The standard identification of the Yoneda algebra of a Koszul algebra with its quadratic dual therefore gives $\operatorname{Ext}_R^*(\Fp,\Fp)\cong R^!$. Finally, Theorem~\ref{thm:lowbit-core} identifies the degree-one-generated cohomology algebra of every group in the regular branch with $R$, so the same description applies throughout that family.
	\end{proof}

	\section{Cohomology and inflation in the inversion family}\label{sec:limit}

	The group $C_4\rtimes_{-1}C_4$ appearing in Proposition~\ref{prop:lowbit-quotient} is also the first level of a natural inverse system of metacyclic groups. The degree-one algebra is unchanged throughout this system, while higher-degree classes exhibit different behavior under inflation. We determine the full finite-level cohomology rings and their inflation maps in order to identify the classes that persist in the continuous limit.

	Let
	\[G_m=\langle a,b\mid a^4=1,\ b^{2^m}=1,\ b^{-1}ab=a^{-1}\rangle=C_4\rtimes_{-1}C_{2^m},\;\; m\ge 2,\]
	with quotient maps
	\[\pi_m:G_{m+1}\twoheadrightarrow G_m.\]
	Then
	\[\Gamma:=\varprojlim_m G_m\cong C_4\rtimes_{-1}\mathbb Z_2.\]

	\subsection{Finite-level cohomology and inflation}

	The mod-$2$ cohomology of $G_2=C_4\rtimes_{-1}C_4$ was computed by David Benson in Appendix~A of \cite{chebolu-minac-okay-schultz-ure-benson}, in the course of the proof of Proposition~A.1. In his notation, the group is
	\[H_2=\langle g,h\mid g^4=h^4=1,\ gh=h^{-1}g\rangle,\]
	which agrees with our $G_2$ under $g=b$ and $h=a$.
	
	Benson's calculation gives, after renaming the generators to agree with our degree-one characters,
	\[H^*(G_2,\Fp)\cong\frac{\Fp[x,y,u,w]}{(y^2,x^2+xy)},\;\; |x|=|y|=1,\;\; |u|=|w|=2.\]
	Here $u$ is inflated from the degree-two generator of the cohomology of the cyclic quotient $G_2/\langle a\rangle\cong C_4$.
	
	The second degree-two generator $w$ arises in Benson's calculation from the central extension
	\[1\longrightarrow\langle a^2\rangle\longrightarrow G_2\longrightarrow C_4\times C_2\longrightarrow1.\]
	Writing $H^*(\langle a^2\rangle,\Fp)=\Fp[\zeta]$, with $|\zeta|=1$, the associated Lyndon--Hochschild--Serre spectral sequence has
	\[d_2(\zeta)=x^2+xy.\]
	Benson shows that $\zeta^2$ survives the spectral sequence and gives rise to the degree-two generator $w$. In particular,
	\[\res_{\langle a^2\rangle}(w)=\zeta^2\ne0.\]
	Since $\langle a^2\rangle\subseteq\langle a\rangle$, it follows that
	\[\res_{\langle a\rangle}(w)\ne0.\]
	
	We take $\omega_{A,2}=w$. For each $m\ge2$, let
	\[q_m:G_m\twoheadrightarrow G_2\]
	be the quotient map fixing $a$ and reducing the exponent of $b$ modulo $4$, and define
	\[\omega_{A,m}:=q_m^*(\omega_{A,2})\in H^2(G_m,\Fp).\]
	Because $q_m$ restricts to the identity on $\langle a\rangle\cong C_4$, naturality of restriction gives
	\[\res_{\langle a\rangle}(\omega_{A,m})=\res_{\langle a\rangle}(\omega_{A,2})\ne0.\]
	Thus the degree-two classes $\omega_{A,m}$ are nonzero at every finite level.
	
	Let
	\[\omega_{B,m}\in H^2(G_m,\Fp)\]
	be the inflation of the standard degree-two class of $C_{2^m}$. Since the quotient $G_m\twoheadrightarrow C_{2^m}$ splits, this class is nonzero.
	
	\begin{lemma}\label{lem:cyclic-inflation-zero}
		The inflation map
		\[H^2(C_{2^m},\Fp)\longrightarrow H^2(C_{2^{m+1}},\Fp)\]
		is zero for every $m\ge2$.
	\end{lemma}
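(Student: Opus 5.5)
The plan is to identify $H^2(C_{2^k},\Fp)$ with the image of the Bockstein of the mod-$2^k$ reduction of the integral generator, and then compute the effect of the quotient map $\pi:C_{2^{m+1}}\twoheadrightarrow C_{2^m}$ at the integral level.

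Recall that $H^2(C_n,\mathbb Z)\cong \mathbb Z/n$, generated by $\widetilde\beta(\chi_n)$. Here $\chi_n\in H^1(C_n,\mathbb Z/n)=\operatorname{Hom}(C_n,\mathbb Z/n)$ is the identity character and $\widetilde\beta$ is the connecting map of $0\to\mathbb Z\xrightarrow{n}\mathbb Z\to\mathbb Z/n\to0$. For $n=2^k$ with $k\ge2$, the reduction $H^2(C_n,\mathbb Z)\to H^2(C_n,\Fp)$ is surjective, since $H^3(C_n,\mathbb Z)=0$ makes the long exact sequence for $\cdot2$ on $\mathbb Z$ give $H^2(C_n,\mathbb Z)/2\cong H^2(C_n,\Fp)$. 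So the standard degree-two class $u_k$ is the mod-$2$ reduction of the generator $e_k=\widetilde\beta(\chi_{2^k})$.

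Next compute $\pi^*e_m$. Under the identifications $C_{2^m}=\mathbb Z/2^m$ and $C_{2^{m+1}}=\mathbb Z/2^{m+1}$, we have $\pi^*\chi_{2^m}=\chi_{2^{m+1}}\bmod 2^m$. We compare the two Bockstein sequences via the map of coefficient sequences
\[(0\to\mathbb Z\xrightarrow{2^{m+1}}\mathbb Z\to\mathbb Z/2^{m+1}\to0)\longrightarrow(0\to\mathbb Z\xrightarrow{2^{m}}\mathbb Z\to\mathbb Z/2^{m}\to0),\]
given by multiplication by $2$ on the left $\mathbb Z$, the identity on the middle $\mathbb Z$, and reduction on the right. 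Naturality then yields $\pi^*e_m=\widetilde\beta_{2^m}(\pi^*\chi_{2^m})=2\,\widetilde\beta_{2^{m+1}}(\chi_{2^{m+1}})=2e_{m+1}$.

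Equivalently, and more concretely, one can use the explicit carry cocycle $(i,j)\mapsto \lfloor (i+j)/2^m\rfloor$ representing $e_m$. Pulled back to $\mathbb Z/2^{m+1}$, it differs from $2$ times the carry cocycle for $2^{m+1}$ by a coboundary. I would use this as a check.

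Reducing mod $2$ gives $\pi^*u_m=2u_{m+1}=0$, and since $H^2(C_{2^m},\Fp)$ is one-dimensional, spanned by $u_m$, the inflation map is zero.

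The only delicate step is getting the factor $2$ in the naturality comparison right, and the direction of the map of short exact sequences. The hypothesis $m\ge2$ only ensures that the standard class is the reduction of the integral generator rather than $y^2$. Even for $m=1$ the same argument works, since $H^2(C_2,\mathbb Z)\to H^2(C_2,\Fp)$ is still onto, so no real obstacle is expected.
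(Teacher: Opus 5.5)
Your proof is correct, but it takes a different route from the paper.

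\textbf{The paper's route.} The paper stays entirely inside mod-$2$ cohomology. It uses the five-term inflation--restriction sequence for $1\to C_2\to C_{2^{m+1}}\to C_{2^m}\to1$. Inflation is an isomorphism on $H^1$, because parity pulls back to parity, so restriction to $H^1(C_2,\Fp)$ vanishes. Hence the transgression $H^1(C_2,\Fp)\to H^2(C_{2^m},\Fp)$ is injective, and since both groups are one-dimensional it is surjective. Exactness then forces inflation on $H^2$ to be zero.

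\textbf{Your route.} You pass to integral cohomology. You apply naturality of the connecting map to the morphism of coefficient sequences given by multiplication by $2$, the identity, and reduction, obtaining $\pi^*e_m=2e_{m+1}$. You then reduce mod $2$, using $H^3(C_{2^k},\mathbb Z)=0$ to see that $u_k$ is the reduction of $e_k$. The details check out:
\begin{itemize}
\item the diagram commutes and you have the direction of the map right;
\item the connecting-map identity $\widetilde\beta_{2^m}(\rho_*\chi)=2\,\widetilde\beta_{2^{m+1}}(\chi)$ is the correct form of naturality;
\item your carry-cocycle check holds: the pulled-back carry cocycle equals $2$ times the carry cocycle for $2^{m+1}$, minus the coboundary of $i\mapsto\lfloor i/2^m\rfloor$ on $\{0,\dots,2^{m+1}-1\}$.
\end{itemize}

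\textbf{Comparison.} The paper's argument is shorter and needs only a dimension count in the coefficients the paper actually uses. Yours carries more information. It shows that integrally the inflation $\mathbb Z/2^m\to\mathbb Z/2^{m+1}$ on $H^2$ is injective, namely multiplication by $2$. So the mod-$2$ vanishing is exactly the statement that the pulled-back integral generator is divisible by $2$. By multiplicativity you also get $\pi^*(e_m^k)=2^ke_{m+1}^k$ in every even degree.

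Both arguments in fact work for $m\ge1$, as your closing remark suggests. For $C_2$ the reduction of the integral generator is $\beta(y)=y^2$ itself, so nothing distinguishes that case.
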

	
	\begin{proof}
		For
		\[1\longrightarrow C_2\longrightarrow C_{2^{m+1}}\longrightarrow C_{2^m}\longrightarrow1,\]
		the first inflation map in the five-term sequence is an isomorphism on $H^1$, because the parity character pulls back to the parity character. The following restriction map to $H^1(C_2,\Fp)$ is therefore zero, so the transgression
		\[H^1(C_2,\Fp)\longrightarrow H^2(C_{2^m},\Fp)\]
		is injective. Both groups are one-dimensional, hence the transgression is an isomorphism and the next inflation map in degree $2$ is zero.
	\end{proof}

	The preceding lemma determines what happens to the quotient-side periodicity class, while the construction of $\omega_{A,m}$ gives a compatible class restricting nontrivially to $C_4$. To describe inflation beyond degree $2$, we must identify these classes within the full cohomology ring. The following proposition provides this description and determines the kernel and image of inflation in every degree.

	\begin{proposition}\label{prop:omegaB-dies}\label{prop:full-inflation}
		For $m\ge2$, write $x_m,y_m\in H^1(G_m,\Fp)$ for the presentation-adapted characters and set
		\[u_m=\omega_{B,m},\;\; w_m=\omega_{A,m}.\]
		Then, with $|x_m|=|y_m|=1$ and $|u_m|=|w_m|=2$,
		\[H^*(G_m,\Fp)\cong\frac{\Fp[x_m,y_m,u_m,w_m]}{(y_m^2,x_m^2+x_my_m)}.\]
		Under the inflation homomorphism $\pi_m^*:H^*(G_m,\Fp)\longrightarrow H^*(G_{m+1},\Fp)$, the generators satisfy
		\[\pi_m^*(x_m)=x_{m+1},\;\;\pi_m^*(y_m)=y_{m+1},\;\;\pi_m^*(u_m)=0,\;\;\pi_m^*(w_m)=w_{m+1}.\]
		Consequently,
		\[\ker(\pi_m^*)=(u_m)\]
		and
		\[\operatorname{im}(\pi_m^*)=\frac{\Fp[x_{m+1},y_{m+1},w_{m+1}]}{(y_{m+1}^2,x_{m+1}^2+x_{m+1}y_{m+1})}.\]
		In particular, $\dim_{\Fp}H^n(G_m,\Fp)=n+1$ for every $n\ge0$. The rank of $\pi_m^*$ is $1$ in degree $0$ and $2$ in each positive degree, and its kernel has dimension $n-1$ in degree $n\ge2$.
	\end{proposition}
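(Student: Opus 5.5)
We first compute $H^*(G_m,\Fp)$ at every finite level, then read off the behaviour of inflation from the generators.

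\textbf{Step 1: the ring at level $m$.} We use the LHS spectral sequence of the split extension
\[1\to\langle a\rangle\cong C_4\to G_m\to C_{2^m}\to1.\]
Inversion acts trivially on $H^*(C_4,\Fp)=\Lambda(\bar x)\otimes\Fp[\bar w]$, since an odd-power automorphism acts trivially mod $2$. Hence
\[E_2=\Lambda(y)\otimes\Fp[u]\otimes\Lambda(\bar x)\otimes\Fp[\bar w].\]
This has total dimension $n+1$ in degree $n$.

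The bottom row survives, because the extension splits. The class $\bar x$ survives, since it is the restriction of the global character $x_m$. The class $\bar w$ survives, since $w_m=\omega_{A,m}$ restricts to it nontrivially on $\langle a\rangle$, as established above.

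The spectral sequence is multiplicative and generated by permanent cycles, so it collapses at $E_2$. This gives $\dim H^n(G_m,\Fp)=n+1$.

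\textbf{Step 2: relations and presentation.} The relations $y_m^2=0$ and $x_m^2=x_my_m$ hold by Lemmas~\ref{lem:y-lifts} and~\ref{lem:bockstein-identity}, since $G_m$ lies in the regular branch. This gives a map
\[\Fp[x,y,u,w]/(y^2,x^2+xy)\to H^*(G_m,\Fp).\]
The source has monomial basis $\{u^iw^j,\ xu^iw^j,\ yu^iw^j,\ xyu^iw^j\}$, of dimension $n+1$ in degree $n$.

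To show the map is surjective, equivalently injective, we match associated graded pieces. The monomials $u^iw^j$, $yu^iw^j$, $xu^iw^j$, $xyu^iw^j$ map to the $E_\infty$ classes $u^i\bar w^j$, $yu^i\bar w^j$, $\bar xu^i\bar w^j$, $y\bar xu^i\bar w^j$ respectively. These are nonzero and distinct in the associated graded, so the map is an isomorphism.

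The one subtle point is making sure that the filtration leading terms are as claimed. In particular, $x_m$ has $E_\infty$ representative $\bar x$ in $E^{0,1}$. This holds because $x_m$ restricts nontrivially to $\langle a\rangle$. I expect this bookkeeping, and the verification that $\omega_{A,m}$ really represents $\bar w$ independently of $m$, to be the main obstacle. Both follow from naturality along $q_m$.

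\textbf{Step 3: inflation.} Along $\pi_m$ the generators behave as follows.
\begin{enumerate}
\item $\pi_m^*(x_m)=x_{m+1}$ and $\pi_m^*(y_m)=y_{m+1}$, directly from the values on $a$ and $b$.
\item $\pi_m^*(w_m)=w_{m+1}$, since $q_m\circ\pi_m=q_{m+1}$.
\item $\pi_m^*(u_m)=0$, since $u_m$ is inflated from $C_{2^m}$ and the square $G_{m+1}\to G_m\to C_{2^m}$ factors through $C_{2^{m+1}}\to C_{2^m}$, where inflation in degree $2$ is zero by Lemma~\ref{lem:cyclic-inflation-zero}.
\end{enumerate}

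It follows that the image is the subalgebra generated by $x_{m+1},y_{m+1},w_{m+1}$. By the Step 2 basis this subalgebra has the stated presentation, with basis $\{w^j,xw^j,yw^j,xyw^j\}$. Its dimension is $1$ in degree $0$ and $2$ in each positive degree.

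The ideal $(u_m)$ lies in the kernel. Its dimension in degree $n$ is $(n+1)-2=n-1$ for $n\ge2$, which matches $\dim H^n-\operatorname{rank}$. Hence $\ker\pi_m^*=(u_m)$.
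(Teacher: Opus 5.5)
Your proposal is correct and follows the paper's proof. Both arguments use the collapse of the LHS spectral sequence for $1\to\langle a\rangle\to G_m\to C_{2^m}\to1$, since $x_m,y_m,u_m,w_m$ are permanent cycles generating $E_2$; then the regular-branch relations $y_m^2=0$, $x_m^2=x_my_m$ with a comparison of $1/(1-t)^2$ Hilbert series; and then naturality along $q_m\circ\pi_m=q_{m+1}$ together with Lemma~\ref{lem:cyclic-inflation-zero} for the inflation maps. Your explicit matching of the monomial basis with the $E_\infty$ basis is just a more detailed form of the paper's remark that the images of the generators generate the associated graded algebra.
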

	
	\begin{proof}
		
		Let $A=\langle a\rangle\cong C_4$ and $Q_m=G_m/A=\langle\bar b\rangle\cong C_{2^m}$, where $\bar b=bA$. Since the quotient map has the section $\bar b\mapsto b$, we have the split extension \[1\longrightarrow A\longrightarrow G_m\longrightarrow Q_m\longrightarrow1.\] We use its Lyndon--Hochschild--Serre spectral sequence to compare the cohomology of $G_m$ with that of the two cyclic groups; for resolutions adapted to group extensions, see \cite{wall}. The generator $\bar b$ acts on $A$ by inversion, which acts trivially on both the degree-one character and the degree-two periodicity generator of $H^*(A,\Fp)$. Thus \[E_2^{p,q}=H^p(Q_m,H^q(A,\Fp))\Longrightarrow H^{p+q}(G_m,\Fp)\] and \[E_2^{*,*}\cong\frac{\Fp[\bar x,\bar y,\bar u,\bar w]}{(\bar x^2,\bar y^2)},\] where \[|\bar x|=(0,1),\quad |\bar w|=(0,2),\quad |\bar y|=(1,0),\quad |\bar u|=(2,0).\] The classes $\bar x,\bar w$ come from $H^*(A,\Fp)$, while $\bar y,\bar u$ come from $H^*(Q_m,\Fp)$.
		To determine the differentials, we identify these four generators with classes already defined on $G_m$. The classes $y_m$ and $u_m$ are inflated from $Q_m$ and represent $\bar y$ and $\bar u$. The restriction of $x_m$ to $A$ is the nonzero degree-one character, while the restriction of $w_m$ is the nonzero degree-two periodicity class. Thus $x_m,w_m$ represent $\bar x,\bar w$ under the edge maps. All four generators are permanent cycles because they are represented by classes in $H^*(G_m,\Fp)$. Since they generate the $E_2$-page multiplicatively, every differential vanishes and the spectral sequence collapses at $E_2$.

		The classes $x_m,y_m,u_m,w_m$ generate $H^*(G_m,\Fp)$, since their images generate the associated graded algebra. By Theorem~\ref{thm:lowbit-core},
		\[y_m^2=0,\qquad x_m^2=x_my_m.\]
		These relations give a surjective homomorphism
		\[\frac{\Fp[x_m,y_m,u_m,w_m]}{(y_m^2,x_m^2+x_my_m)}\longrightarrow H^*(G_m,\Fp).\]
		The source has Hilbert series
		\[\frac{(1+t)^2}{(1-t^2)^2}=\frac{1}{(1-t)^2}.\]
		The collapse of the spectral sequence gives the same Hilbert series for $H^*(G_m,\Fp)$, so the surjection is an isomorphism in every degree.
		
		The maps $\pi_m$ fix $a$ and reduce the exponent of $b$ modulo $2^m$. Consequently, the presentation-adapted characters satisfy
		\[\pi_m^*(x_m)=x_{m+1},\qquad\pi_m^*(y_m)=y_{m+1}.\]
		The identities $q_{m+1}=q_m\circ\pi_m$ and $w_m=q_m^*(\omega_{A,2})$ give
		\[\pi_m^*(w_m)=w_{m+1}.\]
		Likewise, $u_m$ is inflated from $C_{2^m}$, and naturality together with Lemma~\ref{lem:cyclic-inflation-zero} gives
		\[\pi_m^*(u_m)=0.\]
		
		Under the ring presentations just established, $\pi_m^*$ is therefore the homomorphism obtained by setting $u_m=0$ and identifying the other three generators with their counterparts at level $m+1$. It follows that its kernel is $(u_m)$ and its image is the displayed subalgebra. The latter has Hilbert series
		\[\frac{(1+t)^2}{1-t^2}=\frac{1+t}{1-t},\]
		so its degree-zero component has dimension $1$ and every positive-degree component has dimension $2$. Comparison with $1/(1-t)^2$ gives the assertions about ranks and kernel dimensions.
	\end{proof}
	
	\subsection{The continuous ring as the resulting limit}

	The finite-level calculation separates the two periodicity directions in every cohomological degree. The quotient-side class $u_m$ disappears under the next inflation map, while the subalgebra generated by $x_m,y_m,w_m$ survives unchanged. The continuous cohomology ring is therefore obtained by retaining precisely this stable part of the finite cohomology.

	For finite discrete coefficients,
	\[H^q_{\cts}(\Gamma,\Fp)\cong\varinjlim_m H^q(G_m,\Fp)\]
	with transition maps given by inflation; see \cite[Proposition~4.18]{harari} and the standard treatments of continuous cohomology in \cite{serre-galois,neukirch-schmidt-wingberg}.
	
	The continuous LHS spectral sequence for
	\[1\longrightarrow C_4\longrightarrow\Gamma\longrightarrow\mathbb Z_2\longrightarrow1\]
	has
	\[E_2^{p,q}\cong H^p_{\cts}(\mathbb Z_2,\Fp)\otimes H^q(C_4,\Fp).\]
	The inversion action is trivial on $H^*(C_4,\Fp)$, and $\cd_2(\mathbb Z_2)=1$, so the spectral sequence is concentrated in columns $0$ and $1$ and collapses. The continuous Bockstein calculation, as in Section~\ref{sec:uniform-core}, gives
	\[y^2=0,\qquad x^2=xy.\]
	A class $\omega\in H^2_{\cts}(\Gamma,\Fp)$ restricting to the nonzero periodicity class of $C_4$ has nonzero powers in every even degree. The finite-level inflation calculation identifies this class and the resulting ring structure explicitly.
	
	\begin{corollary}\label{thm:continuous-ring}
		One has
		\[H^*_{\cts}(C_4\rtimes_{-1}\mathbb Z_2,\Fp)\cong\frac{\Fp[x,y,\omega]}{(y^2,x^2+xy)},\qquad |x|=|y|=1,\quad |\omega|=2.\]
		Under the direct-limit identification, $\omega$ is represented by the compatible family $\{\omega_{A,m}\}_{m\ge2}$, while the classes $\omega_{B,m}$ represent zero in the limit.
	\end{corollary}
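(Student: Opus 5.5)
The plan is to compute the direct limit $\varinjlim_m H^*(G_m,\Fp)$ using Proposition~\ref{prop:full-inflation}, and then use the identification $H^q_{\cts}(\Gamma,\Fp)\cong\varinjlim_m H^q(G_m,\Fp)$ recalled above. The limit is computed degreewise, but it inherits a ring structure because every inflation $\pi_m^*$ is a ring homomorphism. So the limit is the filtered colimit of graded $\Fp$-algebras, and the identification with continuous cohomology respects cup products.

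\textbf{Step 1: the algebra homomorphism.} Set $S=\Fp[x,y,\omega]/(y^2,x^2+xy)$. For each $m\ge2$, define $\phi_m:S\to H^*(G_m,\Fp)$ by $x\mapsto x_m$, $y\mapsto y_m$, $\omega\mapsto w_m$. This is well defined because the relations $y_m^2=0$ and $x_m^2=x_my_m$ hold at every level. By Proposition~\ref{prop:full-inflation}, $\pi_m^*\circ\phi_m=\phi_{m+1}$. The family therefore induces a graded algebra map $\phi:S\to\varinjlim_m H^*(G_m,\Fp)$.

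\textbf{Step 2: injectivity.} By the ring presentation in Proposition~\ref{prop:full-inflation}, $\phi_m$ is the inclusion of the subalgebra generated by $x_m,y_m,w_m$. The presentation is a polynomial ring in $u_m,w_m$ over the four-dimensional algebra, so this subalgebra is free over $\Fp[w_m]$ with basis $1,x_m,y_m,x_my_m$. Hence each $\phi_m$ is injective. A class of $S$ mapping to zero in the colimit would vanish at some finite level, so $\phi$ is injective.

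\textbf{Step 3: surjectivity.} Every element of $H^*(G_m,\Fp)$ can be written as $s+u_m r$ with $s\in\operatorname{im}\phi_m$. Its inflation is $\pi_m^*(s)$, since $\pi_m^*(u_m)=0$, and $\pi_m^*(s)$ lies in $\operatorname{im}\phi_{m+1}$. So every class in the colimit comes from $S$, and $\phi$ is an isomorphism. The same computation shows two further things. The family $\{\omega_{A,m}\}$ is compatible and represents $\omega=\phi(\omega)$. Each $\omega_{B,m}=u_m$ maps to zero already at level $m+1$, so it represents zero in the limit.

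\textbf{Main obstacle.} The delicate point is justifying that the colimit identification is multiplicative. Cup products in continuous cohomology with finite coefficients are computed by inflating cocycles from finite quotients, and $\Gamma$ has the cofinal family of open normal subgroups $\langle b^{2^m}\rangle$ with quotients $G_m$. I would cite the compatibility of the cited isomorphism with cup products rather than reprove it. As a consistency check, $\dim H^n_{\cts}$ equals $2$ for $n\ge1$, which matches the collapsed two-column spectral sequence: $E_2^{0,n}$ and $E_2^{1,n-1}$ are each one-dimensional.
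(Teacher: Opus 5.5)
Your proposal is correct and takes essentially the same route as the paper. Both arguments pass to the direct limit of the finite-level presentations in Proposition~\ref{prop:full-inflation}, using that inflation kills $u_m=\omega_{B,m}$ and carries $x_m,y_m,w_m$ to their level-$(m+1)$ counterparts. Your explicit injectivity/surjectivity check for $\phi\colon S\to\varinjlim_m H^*(G_m,\Fp)$, and your remark that the colimit identification respects cup products (the subgroups $\langle b^{2^m}\rangle$ are cofinal), spell out details the paper leaves implicit.
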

	
	\begin{proof}
		By Proposition~\ref{prop:full-inflation}, each finite-level ring has the presentation
		\[H^*(G_m,\Fp)\cong\frac{\Fp[x_m,y_m,u_m,w_m]}{(y_m^2,x_m^2+x_my_m)},\]
		and inflation preserves $x_m,y_m,w_m$ while sending $u_m$ to zero. Thus every class divisible by $u_m$ vanishes at the next level, whereas the subalgebras generated by $x_m,y_m,w_m$ are identified isomorphically under inflation. Taking the direct limit gives
		\[\varinjlim_m H^*(G_m,\Fp)\cong\frac{\Fp[x,y,\omega]}{(y^2,x^2+xy)},\qquad |\omega|=2.\]
		Here $\omega$ is represented by $\{w_m\}_{m\ge2}=\{\omega_{A,m}\}_{m\ge2}$, and the quotient-side classes $u_m=\omega_{B,m}$ vanish in the limit. The asserted continuous cohomology ring follows.
	\end{proof}
	
	\begin{remark}
		The same continuous LHS argument applies to $C_{2^\alpha}\rtimes_r\mathbb Z_2$ with $\alpha\ge2$ and an action $r\equiv-1\pmod4$ of $2$-power order. The inversion family considered here additionally provides explicit finite-level representatives and determines the inflation maps in every degree.
	\end{remark}

	\section{An arithmetic consequence}\label{sec:arithmetic}
	The preceding results apply whenever arithmetic conditions identify a Galois group with one of the metacyclic families considered here. In that situation, the group-theoretic classification determines its degree-one cohomology algebra and its Koszul properties. We illustrate this with the real quadratic fields studied by Chems-Eddin and El Mamry.

	Let $\eta\in\{1,2\}$ and let $q,r,s$ be distinct odd primes satisfying the residue-symbol, unit, and $2$-class-number conditions considered by Chems-Eddin and El Mamry \cite[Theorem~3.10]{chems-eddin-el-mamry}. Set $F=\mathbb Q(\sqrt{\eta qrs})$, and write $h_2(d)$ for the largest power of $2$ dividing the class number of $\mathbb Q(\sqrt d)$. Let $F_\infty$ denote the cyclotomic $\mathbb Z_2$-extension of $F$, and let $\mathcal L(F_\infty)$ be its maximal unramified pro-$2$-extension. We consider the Galois group
	\[G_{F_\infty}=\operatorname{Gal}(\mathcal L(F_\infty)/F_\infty).\]
	Theorem~3.10 of Chems-Eddin and El Mamry identifies the metacyclic structure of this group under the stated arithmetic conditions. In particular, when the quartic-residue symbols satisfy $\left(\frac{r}{s}\right)_4\ne\left(\frac{s}{r}\right)_4$, their result selects a group of Type~1 with $\alpha=2$ in the classification recalled in Section~\ref{sec:four-types}. This allows us to apply the degree-one cohomology and Koszulity results established above.
	
	\begin{corollary}\label{cor:arithmetic-core}
		Assume the hypotheses of \cite[Theorem~3.10]{chems-eddin-el-mamry}, and let $m\ge2$ be determined by $h_2(\eta qrs)=2^{m+1}$. If \[\left(\frac{r}{s}\right)_4\ne\left(\frac{s}{r}\right)_4,\] then \[G_{F_\infty}\cong C_4\rtimes_{-1}C_{2^m}\] and \[\operatorname{Dec}(H^*(G_{F_\infty},\Fp))\cong\frac{\Fp[x,y]}{(y^2,x^2+xy)}.\] In particular, this degree-one algebra is PBW, Koszul, universally Koszul, and strongly Koszul.
	\end{corollary}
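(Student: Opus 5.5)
The plan is to import the group identification from \cite[Theorem~3.10]{chems-eddin-el-mamry} and then apply the results of Sections~\ref{sec:uniform-core} and~\ref{sec:koszul}; nothing new needs to be computed. First, under the stated arithmetic hypotheses and with $\left(\frac{r}{s}\right)_4\ne\left(\frac{s}{r}\right)_4$, the cited theorem presents $G_{F_\infty}$ as a metacyclic group of Type~1. That is, it has the form \eqref{eq:unified-presentation} with $r=-1$, $c=0$ and $\alpha=2$, so $a^4=1$, $b^{2^m}=1$ and $b^{-1}ab=a^{-1}$. The exponent $2^m$ of $b$ is tied to the $2$-class number through $h_2(\eta qrs)=2^{m+1}$, which matches $|G|=2^{\alpha+m}=2^{m+2}$ and the abelianization $C_2\times C_{2^m}$ from Proposition~\ref{prop:compatibility}. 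This gives $G_{F_\infty}\cong C_4\rtimes_{-1}C_{2^m}=G_m$, and $m\ge2$ puts us in the range $\alpha,m\ge2$ used throughout.

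Second, this presentation has unique normal forms $a^ib^j$ with $0\le i<4$ and $0\le j<2^m$, since it is a split extension of $C_{2^m}$ acting on $C_4$ by inversion. We also have $r=-1\equiv-1\pmod4$ and $c=0\equiv0\pmod4$. So Theorem~\ref{thm:lowbit-core}(1) applies, or equivalently Corollary~\ref{cor:uniform-core} for Type~1, and gives $\operatorname{Dec}(H^*(G_{F_\infty},\Fp))\cong\Fp[x,y]/(y^2,x^2+xy)$. As a consistency check, this is the degree-one part of the ring in Proposition~\ref{prop:full-inflation}.

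Third, Proposition~\ref{prop:koszul-package}, or Corollary~\ref{cor:koszulity-dichotomy} in the branch $c\equiv0\pmod4$, shows that this algebra is PBW, Koszul, universally Koszul and strongly Koszul.

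The main obstacle is the first step: faithfully translating the cited theorem's presentation into the paper's conventions. The cited presentation may use a different conjugation convention, for example $aba^{-1}$ versus $b^{-1}ab$, or a different labelling of generators. I would check that inversion is symmetric under these changes, and confirm that the cited exponent of the cyclic quotient equals $2^m$ with $h_2(\eta qrs)=2^{m+1}$ and not an off-by-one variant. Since $G_{F_\infty}$ is a finitely generated pro-$2$ group that here is finite, discrete and continuous cohomology agree, so no limit argument is needed.
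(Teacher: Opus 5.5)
Your proposal is correct and follows the paper's proof. Both import the Type~1, $\alpha=2$ identification $G_{F_\infty}\cong C_4\rtimes_{-1}C_{2^m}$ from \cite[Theorem~3.10]{chems-eddin-el-mamry}, then apply Theorem~\ref{thm:lowbit-core}(1) for the degree-one algebra and Corollary~\ref{cor:koszulity-dichotomy} (equivalently Proposition~\ref{prop:koszul-package}) for the enhanced Koszul properties. Your extra checks on normal forms, on conventions, and on the agreement of discrete and continuous cohomology for this finite group are sound but go beyond what the paper records.
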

	\begin{proof}
		By \cite[Theorem~3.10]{chems-eddin-el-mamry}, the quartic-residue condition identifies $G_{F_\infty}$ as the Type~1 group with $\alpha=2$ and quotient exponent $m$. The degree-one algebra follows from Theorem~\ref{thm:lowbit-core}, and its enhanced Koszul properties follow from Corollary~\ref{cor:koszulity-dichotomy}.
	\end{proof}


\begin{thebibliography}{99}
		
		\bibitem{aaboun-zekhnini}
		B. Aaboun and A. Zekhnini, On the metacyclic \(2\)-groups whose abelianizations are of type \((2,2^n)\), \(n\ge 2\), and applications, \textit{Research in Number Theory} \textbf{9} (2023), Paper No.~55. \url{https://doi.org/10.1007/s40993-023-00461-x}
		
		\bibitem{adem-milgram}
		A. Adem and R. J. Milgram, \textit{Cohomology of Finite Groups}, 2nd ed., Grundlehren der mathematischen Wissenschaften, Vol.~309, Springer, Berlin, 2004. \url{https://doi.org/10.1007/978-3-662-06280-7}
		
		\bibitem{beilinson-ginzburg-soergel}
		A. Beilinson, V. Ginzburg, and W. Soergel, Koszul duality patterns in representation theory, \textit{Journal of the American Mathematical Society} \textbf{9} (1996), no.~2, 473--527. \url{https://doi.org/10.1090/S0894-0347-96-00192-0}
		
		\bibitem{benson}
		D. J. Benson, \textit{Representations and Cohomology II: Cohomology of Groups and Modules}, Cambridge Studies in Advanced Mathematics, Vol.~31, Cambridge University Press, Cambridge, 1991. \url{https://doi.org/10.1017/CBO9780511623622}
		
		\bibitem{bergman}
		G. M. Bergman, The diamond lemma for ring theory, \textit{Advances in Mathematics} \textbf{29} (1978), no.~2, 178--218. \url{https://doi.org/10.1016/0001-8708(78)90010-5}
		
		\bibitem{brown}
		K. S. Brown, \textit{Cohomology of Groups}, Graduate Texts in Mathematics, Vol.~87, Springer-Verlag, New York, 1982. \url{https://doi.org/10.1007/978-1-4684-9327-6}
		
		\bibitem{cassella-quadrelli}
		A. Cassella and C. Quadrelli, Right-angled Artin groups and enhanced Koszul properties, \textit{Journal of Group Theory} \textbf{24} (2021), no.~1, 17--38. \url{https://doi.org/10.1515/jgth-2020-0049}
		
		\bibitem{chebolu-minac-okay-schultz-ure-benson}
		S. K. Chebolu, J. Min\'a\v{c}, C. Okay, A. Schultz, C. Ure, and D. Benson, The Bloch--Kato conjecture, decomposing fields, and generating cohomology in degree one, \textit{Journal of Number Theory} \textbf{286} (2026), 301--341. \url{https://doi.org/10.1016/j.jnt.2026.02.007}
		
		\bibitem{chems-eddin-el-mamry}
		M. M. Chems-Eddin and H. El Mamry, On the existence of the maximal unramified pro-\(2\)-extension over the cyclotomic \(\mathbb Z_2\)-extension with prescribed metacyclic Galois group, \textit{The Ramanujan Journal} \textbf{70} (2026), Article~17. \url{https://doi.org/10.1007/s11139-026-01381-x}
		
		\bibitem{conca-universal}
		A. Conca, Universally Koszul algebras, \textit{Mathematische Annalen} \textbf{317} (2000), no.~2, 329--346. \url{https://doi.org/10.1007/s002080000100}
		
		\bibitem{conca-de-negri-rossi}
		A. Conca, E. De Negri, and M. E. Rossi, Koszul algebras and regularity, in \textit{Commutative Algebra: Expository Papers Dedicated to David Eisenbud on the Occasion of his 65th Birthday}, Springer, New York, 2013, pp.~285--315. \url{https://doi.org/10.1007/978-1-4614-5292-8_8}
		
		\bibitem{diethelm}
		T. Diethelm, The mod-\(p\) cohomology rings of the nonabelian split metacyclic \(p\)-groups, \textit{Archiv der Mathematik} \textbf{44} (1985), no.~1, 29--38. \url{https://doi.org/10.1007/BF01193778}
		
		\bibitem{harari}
		D. Harari, \textit{Galois Cohomology and Class Field Theory}, Universitext, Springer, 2020. \url{https://doi.org/10.1007/978-3-030-43901-9}
		
		\bibitem{herzog-hibi-restuccia}
		J. Herzog, T. Hibi, and G. Restuccia, Strongly Koszul algebras, \textit{Mathematica Scandinavica} \textbf{86} (2000), no.~2, 161--178.  \url{https://doi.org/10.7146/math.scand.a-14287}
		
		\bibitem{hochschild-serre}
		G. Hochschild and J.-P. Serre, Cohomology of group extensions, \textit{Transactions of the American Mathematical Society} \textbf{74} (1953), no.~1, 110--134. \url{https://doi.org/10.1090/S0002-9947-1953-0052438-8}
		
		\bibitem{huebschmann-differentials}
		J. Huebschmann, Automorphisms of group extensions and differentials in the Lyndon--Hochschild--Serre spectral sequence, \textit{Journal of Algebra} \textbf{72} (1981), no.~2, 296--334. \url{https://doi.org/10.1016/0021-8693(81)90296-9}
		
		\bibitem{huebschmann}
		J. Huebschmann, The mod-\(p\) cohomology rings of metacyclic groups, \textit{Journal of Pure and Applied Algebra} \textbf{60} (1989), no.~1, 53--103. \url{https://doi.org/10.1016/0022-4049(89)90107-2}
		
		\bibitem{iyudu-jordan-plane}
		N. K. Iyudu, Representation spaces of the Jordan plane, Communications in Algebra \textbf{42} (2014), no.~8, 3507--3540. \url{https://doi.org/10.1080/00927872.2013.788184}
		
		\bibitem{lyndon}
		R. C. Lyndon, The cohomology theory of group extensions, \textit{Duke Mathematical Journal} \textbf{15} (1948), no.~1, 271--292. \url{https://doi.org/10.1215/S0012-7094-48-01528-2}
		
		\bibitem{mcconnell-robson}
		J. C. McConnell and J. C. Robson, \textit{Noncommutative Noetherian Rings}, revised ed., Graduate Studies in Mathematics, Vol.~30, American Mathematical Society, Providence, RI, 2001. \url{https://doi.org/10.1090/gsm/030}
		
		\bibitem{minac-palaisti-pasini-tan}
		J. Min\'a\v{c}, M. Palaisti, F. W. Pasini, and N. D. T\^an, Enhanced Koszul properties in Galois cohomology, \textit{Research in the Mathematical Sciences} \textbf{7} (2020), no.~2, Paper No.~10, 34 pp. \url{https://doi.org/10.1007/s40687-020-00208-5}
		
		\bibitem{minac-pasini-quadrelli-tan}
		J. Min\'a\v{c}, F. W. Pasini, C. Quadrelli, and N. D. T\^an, Koszul algebras and quadratic duals in Galois cohomology, \textit{Advances in Mathematics} \textbf{380} (2021), Article~107569. \url{https://doi.org/10.1016/j.aim.2021.107569}
		
		\bibitem{neukirch-schmidt-wingberg}
		J. Neukirch, A. Schmidt, and K. Wingberg, \textit{Cohomology of Number Fields}, 2nd ed., Grundlehren der mathematischen Wissenschaften, Vol.~323, Springer, Berlin, 2008. \url{https://doi.org/10.1007/978-3-540-37889-1}
		
		\bibitem{polishchuk-positselski}
		A. Polishchuk and L. Positselski, \textit{Quadratic Algebras}, University Lecture Series, Vol.~37, American Mathematical Society, Providence, RI, 2005. \url{https://doi.org/10.1090/ulect/037}
		
		\bibitem{priddy}
		S. B. Priddy, Koszul resolutions, \textit{Transactions of the American Mathematical Society} \textbf{152} (1970), 39--60. \url{https://doi.org/10.1090/S0002-9947-1970-0265437-8}
		
		\bibitem{quadrelli-few-relations}
		C. Quadrelli,
		Pro-$p$ groups with few relations and universal Koszulity, \textit{Mathematica Scandinavica} \textbf{127} (2021), no.~1, 28--42. \url{https://doi.org/10.7146/math.scand.a-123644}
		
		\bibitem{rusin}
		D. J. Rusin,
		The mod-$2$ cohomology of metacyclic $2$-groups, \textit{Journal of Pure and Applied Algebra} \textbf{44} (1987), 315--327. \url{https://doi.org/10.1016/0022-4049(87)90035-1}
		
		\bibitem{serre-galois}
		J.-P. Serre, \textit{Galois Cohomology}, Springer Monographs in Mathematics, Springer, Berlin, 1997. \url{https://doi.org/10.1007/978-3-642-59141-9}
		
		\bibitem{wall}
		C. T. C. Wall, Resolutions for extensions of groups, \textit{Proceedings of the Cambridge Philosophical Society} \textbf{57} (1961), no.~2, 251--255. \url{https://doi.org/10.1017/S0305004100035155}
		
		\bibitem{weibel}
		C. A. Weibel, \textit{An Introduction to Homological Algebra}, Cambridge Studies in Advanced Mathematics, Vol.~38, Cambridge University Press, Cambridge, 1994. \url{https://doi.org/10.1017/CBO9781139644136}
		
	\end{thebibliography}
\end{document}